\newtheorem{theorem}{Theorem}
\newtheorem{lemma}{Lemma}
\newtheorem{corollary}[lemma]{Corollary}
\newtheorem{proposition}[lemma]{Proposition}
\theoremstyle{definition}
\newtheorem{example}[lemma]{Example}
\newtheorem{remark}[lemma]{Remark}
\numberwithin{equation}{section}
\numberwithin{equation}{section}
\begin{document}
\title{Abstract Ces\`aro spaces. I. Duality{\rm *}}\thanks{{\rm *}This publication has been produced during scholarship 
period of the first author at the Lule{\aa} University of Technology, thanks to a Swedish Institute scholarschip 
(number 0095/2013).}

\author[Le\'snik]{Karol Le\'snik}
\address[Karol Le{\'s}nik]{Institute of Mathematics\\
of Electric Faculty Pozna\'n University of Technology, ul. Piotrowo 3a, 60-965 Pozna{\'n}, Poland}
\email{\texttt{klesnik@vp.pl}}
\author[Maligranda]{Lech Maligranda}
\address[Lech Maligranda]{Department of Engineering Sciences and Mathematics\\
Lule{\aa} University of Technology, SE-971 87 Lule{\aa}, Sweden}
\email{\texttt{lech.maligranda@ltu.se}}
\maketitle

\vspace{-9mm}

\begin{abstract}
We study abstract Ces\`aro spaces $CX$, which may be regarded as generalizations of Ces\`aro sequence spaces $ces_p$ and Ces\`aro function spaces $Ces_p(I)$ on $I = [0,1]$ or $I = [0,\infty)$, and also as the description of optimal domain from which Ces\`aro operator acts to $X$. We find the dual of such spaces in a very general situation. What is however even more important, we do it in the simplest possible way. Our proofs are more elementary than the known ones for $ces_p$ and $Ces_p(I)$. This is the point how our paper should be seen, i.e. not as generalization of known results, but rather like grasping and exhibiting the general nature of the problem, which is not so easy visible in the previous publications. Our results show also an interesting phenomenon that there is a big difference between duality in the cases of finite and infinite interval.
\end{abstract}

\footnotetext[1]{2010 \textit{Mathematics Subject Classification}:  46E30, 46B20, 46B42.}
\footnotetext[2]{\textit{Key words and phrases}: Ces\`aro function spaces, Ces\`aro sequence spaces, Ces\`aro operator, Copson function spaces, Copson sequence spaces, Copson operator, Banach ideal spaces, symmetric spaces, dual spaces, K\"othe dual spaces.}

\vspace{-2mm}

\section{Introduction}
In 1968 the Dutch Mathematical Society posted a problem to find the K\"othe dual of Ces\`aro sequence spaces $ces_p$ and Ces\`aro function spaces $Ces_p[0, \infty)$. In 1974 problem was solved (isometrically) by Jagers \cite{Ja74} even for weighted Ces\`aro 
sequence spaces, but the proof is far from being easy and elementary. Before, it was also known a result of Luxemburg and 
Zaanen \cite{LZ66} who have found the K\"othe dual of $Ces_{\infty}[0,1]$ space (known as the Korenblum-Kre{\u \i}n-Levin 
space -- cf. \cite{KKL48}). Already in 1957 Alexiewicz, in his ovelooked paper \cite{Al57}, found implicitly the K\"othe dual of the weighted 
$ces_{\infty}$-spaces (see Section 5 for more information). Later on some amount of papers appeared in the case of sequence spaces 
as well as for function spaces. Bennett \cite{Be96} proved representation of the dual $(ces_p)^*$ for $1 < p < \infty$ as the corollary 
from factorization theorems for Ces\`aro and $l^p$ spaces. This description is simpler than the one given by Jagers \cite{Ja74}. 
On the one hand, his factorization method was universal enough to be adopted to the function case on $[0,\infty)$ and $[0,1]$, 
which was done by Astashkin and Maligranda \cite[Theorem 3]{AM09}. However, this method is rather complicated 
and indirect, possibly valid only for power functions. 
Totally different approach appeared in the paper by Sy, Zhang and Lee \cite{SZL87}, but the idea was based on Jagers' result. 
Also Kami\'nska and Kubiak in \cite{KK12} were inspired by Jagers when they found an isometric representation of the K\"othe 
dual of weighted Ces\`aro function spaces $Ces_{p,w}$. They used not so easy Jagers' idea of relative concavity and concave 
majorants. In 2007 in the paper by Kerman, Milman and Sinnamon \cite{KMS07} the abstract Ces\`aro spaces $CX$
appeared and the K\"othe dual of these spaces was found but only in the case of rearrangement invariant space $X$ 
on $I = [0, \infty)$. The result comes from the equivalence (in norm) of the Ces\`aro operator with the so-called level function. 
This time again, one has to go through a very technical theory of down spaces and level functions to get the mentioned dual. 

Our goal in this paper is to grasp the general nature of the problem of duality of Ces\`aro spaces. We shall prove the duality 
theorem for abstract Ces\`aro spaces (isomorphic form) by as elementary method as possible. Moreover, our method applies to 
a very general case and in this case our proof is easier and more comprehensive than any earlier. Especially the function case on 
$[0,\infty)$ is instructive, but more delicate modification must be done in the case of interval $[0,1]$. Generally, for one inclusion 
there is crucial Sinnamon's result (which is however very intuitive and elementary one and avoids level functions) and 
the second inclusion follows from some kind of idempotency of the Ces\`aro operator, which was noticed firstly for the sequence 
case by Bennett in \cite{Be96}, which proof was simplified by Curbera and Ricker in \cite{CR13}.

The paper is organized as follows. In Section 2 some necessary definitions and notations are collected, together with some basic
results on Ces{\`a}ro abstract spaces. In particular, we can see when the abstract Ces{\`a}ro spaces $CX$ are nontrivial.

Sections 3 and 4 contain results on the K\"othe dual $(CX)^{\prime}$ of abstract Ces{\`a}ro spaces. There is a big difference 
between the cases on $[0, \infty)$ and on $[0, 1]$, as we can see in Theorems \ref{thm:dualgeneralR}, \ref{thm:dualgeneralon01b} 
and \ref{thm:dualgeneralon01a}. Important in our investigations were earlier results on 
the K{\"o}the dual $(Ces_p[0, \infty))^{\prime}$ due to Kerman-Milman-Sinnamon \cite{KMS07} and $(Ces_p(I))^{\prime}$ due 
to Astashkin-Maligranda \cite{AM09}.

In Section 5, the K\"othe dual of abstract Ces{\`a}ro sequence spaces is presented in Theorem \ref{thm:dualsequence}. We also collected here our knowledge about earlier results on K\"othe duality of Ces{\`a}ro sequence spaces $ces_p$ and their weighted versions. 

In Section 6 we first give in Theorem \ref{thm:LZ} a simple proof of a generalization of the Luxemburg-Zaanen \cite{LZ66} and 
Tandori \cite{Ta55} results on duality of weighted Ces{\`a}ro spaces $Ces_{\infty, w}$. This proof is also working for weighted Ces{\`a}ro 
sequence spaces (implicitely proved by Alexiewicz \cite{Al57}). Then in Theorem \ref{thm:DS} we identify the Ces{\`a}ro-Lorentz space 
$C\Lambda_{\varphi}$ with the weighted $L^1$-space, using our duality result proved in Theorem 3, which simplifies the result 
of Delgado and Soria \cite{DS07}.

Finally, in connection to the proof of Theorem \ref{thm:dualgeneralon01a} we collected Appendices A and B at the end of this paper. 
First one, is about weighted version of the Calder\'on-Mitjagin interpolation theorem and the second contains an improvement 
of the Hardy inequality in weighted spaces $L^p(x^{\alpha})$ on $[0, 1].$

\vspace{-3mm}

\section{ Definitions and basic facts}
We recall some notions and definitions which we will need later on. By $L^0 = L^0(I)$ we denote the set of all equivalence classes of
real-valued Lebesgue measurable functions defined on $I = [0, 1]$ or $I = [0, \infty)$. A {\it Banach ideal space} $X = (X, \|\cdot\|)$ (on $I$) is understood to be a Banach space in $L^0(I)$, which satisfies the so-called ideal property: if $f, g \in L^0(I), |f| \leq |g|$ a.e. on $I$ and 
$g \in X$, then $ f\in X$ and $\|f\| \leq \|g\|$. Sometimes we write $\|\cdot\|_{X}$ to be sure in which space the norm is taken. If it is not stated otherwise, then we understand that in a Banach ideal space there is $f\in X$ with $f(t) > 0$ for each $t\in I$ (such a function is called the {\it weak unit} 
in $X$), which means that ${\rm supp}X = I$.

Since the inclusion of two Banach ideal spaces is continuous, we should write $X\hookrightarrow Y$ rather that $X\subset Y$.
Moreover, the symbol $X\overset{A}{\hookrightarrow }Y$ means $X\hookrightarrow Y$ with the norm of inclusion not bigger 
than $A$, i.e., $\| f\|_{Y} \leq A \|f\|_{X}$ for all $f\in X$. Also $X = Y$ (and $X \equiv Y$) means that the spaces are the same 
and the norms are equivalent (equal).

For a Banach ideal space $X = (X, \|\cdot\|)$ on $I$ the {\it K{\"o}the dual space} (or {\it associated space}) $X^{\prime}$ is the space of
all $f \in L^0(I)$ such that the {\it associate norm}
\begin{equation} \label{dual}
\|f\|^{\prime}: = \sup_{g \in X, \, \|g\|_{X} \leq 1} \int_{I} |f(x) g(x) | \, dx
\end{equation}
is finite. The K{\"o}the dual $X^{\prime} = (X^{\prime}, \|\cdot \|^{\prime})$ is then a Banach ideal space.
Moreover, $X \overset{1}{\hookrightarrow }X^{\prime \prime}$ and we have equality 
$X = X^{\prime \prime}$ with $\|f\| = \|f\|^{\prime \prime}$ if and only if the norm in $X$ has
the {\it Fatou property}, that is, if the conditions $0 \leq f_{n} \nearrow f$ a.e. on $I$
and $\sup_{n \in {\bf N}} \|f_{n}\| < \infty$ imply that $f \in X$ and $\|f_{n}\| \nearrow \|f\|$.

For a Banach ideal space $X = (X, \|\cdot\|)$ on $I$ with the K{\"o}the dual $X^{\prime}$
there holds the following {\it generalized H{\"o}lder-Rogers inequality}: if $f \in X$ and $g \in X^{\prime}$,
then $f g$ is integrable and
\begin{equation} \label{HRinequality}
\int_{I} |f(x) g(x)|\, dx \leq \|f\|_{X} \|g\|_{X^{\prime}}.
\end{equation}
A function $f$ in a Banach ideal space $X$ on $I$ is said to have {\it order continuous norm} in $X$ if, for any
decreasing sequence of Lebesgue measurable sets $A_{n} \subset I $ with empty intersection, we have that 
$\|f \chi_{A_{n}} \| \rightarrow 0$ as $n \rightarrow \infty$. The set of all functions in $X$ with order
continuous norm is denoted by $X_{a}$. If $X_{a} = X$, then the space $X$ is said to be {\it order continuous}.
For order continuous Banach ideal space $X$ the K{\"o}the dual $X^{\prime}$ and the dual space $X^{*}$ coincide.
Moreover, a Banach ideal space $X$ with the Fatou property is reflexive if and only if both $X$ and its associate space 
$X^{\prime}$ are order continuous.

For a weight $w(x)$, i.e. a measurable function on $I$ with $0 < w(x) < \infty$ a.e. and for a Banach ideal space $X$ 
on $I$, the {\it weighted Banach ideal space} $X(w)$ is defined as $X(w)=\{f\in L^0: fw \in X\}$ with the norm 
$\|f\|_{X(w)}=\| f w \|_{X}$. Of course, $X(w)$ is also a Banach ideal space and 
\begin{equation} \label{dualweight}
[X(w)]^{\prime} \equiv X^{\prime}\Big(\frac{1}{w}\Big).
\end{equation}

By a {\it rearrangement invariant} or {\it symmetric space} on $I$ with the Lebesgue measure $m$, we mean a Banach ideal space $X=(X,\| \cdot \|_{X})$ with the additional property that for any two equimeasurable functions 
$f \sim g, f, g \in L^{0}(I)$ (that is, they have the same distribution functions $d_{f}\equiv d_{g}$, where 
$d_{f}(\lambda) = m(\{x \in I: |f(x)|>\lambda \}),\lambda \geq 0,$ and $f\in E$ we have $g\in E$ and $\| f\|_{E} = \| g\|_{E}$. In particular, 
$\| f\|_{X}=\| f^{\ast }\|_{X}$, where $f^{\ast }(t)=\mathrm{\inf } \{\lambda >0\colon \ d_{f}(\lambda ) < t\},\ t\geq 0$. 

For general properties of Banach ideal spaces and symmetric spaces we refer to the books \cite{BS88}, \cite{KA77}, \cite{KPS82}, 
\cite{LT79} and \cite{Ma89}.

In order to define and formulate the results we need
the continuous Ces\`aro operator $C$ defined for $0 < x \in I$ as
$$
Cf(x) =\frac{1}{x} \int_0^x f(t) \,dt,
$$
and also a nonincreasing majorant $\widetilde{f}$ of a given function $f$, which is defined for $x \in I$ as
$$
\widetilde{f}(x) = {\rm ess} \sup_{t \in I, \, t \geq x} |f(t)|.
$$
For a Banach ideal space $X$ on $I$ we define an {\it abstract Ces\`aro space} $CX = CX(I)$ as 
\begin{equation} \label{Cesaro}
CX=\{f\in L^0(I): C|f| \in X\} ~~ {\rm with ~the ~norm } ~~ \|f\|_{CX} = \| C|f| \|_{X},
\end{equation}
and the space $\widetilde{X} = \widetilde{X} (I)$ as 
\begin{equation} \label{falka}
\widetilde{X}=\{f\in L^0(I): \widetilde{f}\in X\} ~~ {\rm with ~the ~norm } ~~ \|f\|_{\widetilde{X}}=\|\widetilde{f}\|_{X}.
\end{equation}

The space $CX$ for a Banach ideal space $X$ on $[0, \infty)$ was defined already in \cite{Ru80} and spaces $CX, \,\widetilde{X}$ 
for $X$ being a symmetric space on $[0,\infty)$ have appeared, for example, in \cite{KMS07} and \cite{DS07}.

The {\it dilation operators} $\sigma_\tau$ ($\tau > 0$) defined on $L^0(I)$ by 
$$
\sigma_\tau f(x) = f(x/\tau) \chi_{I}(x/\tau) = f(x/\tau) \chi_{[0, \, \min(1, \, \tau)]}(x), ~~ x \in I,
$$
are bounded in any symmetric space $X$ on $I$ and $\| \sigma_\tau \|_{X \rightarrow X} \leq \max (1, \tau)$ (see \cite[p. 148]{BS88} and 
\cite[pp. 96-98]{KPS82}). They are also bounded in some Banach ideal spaces which are not necessary symmetric. For example, if either $X = L^p(x^{\alpha})(I)$ or $X = CL^p(x^{\alpha})(I)$, then $\| \sigma_\tau\|_{X \rightarrow X} = \tau^{1/p + \alpha}$ 
(see \cite{Ru80} for more examples).

Let us collect some basic properties of $CX$ and $\widetilde{X}$ spaces. 

\begin{theorem}
Let $X$ be a Banach ideal space on $I$. Then both $CX$ and $\widetilde{X}$ are also Banach ideal spaces on $I$ (not necessary with a weak unit). Moreover,
\begin{itemize}
\item[$(a)$] $CX[0, \infty) \neq \{0\}$ if and only if $\frac{1}{x}\chi_{[a,\infty)}(x)\in X$ for some $a > 0$. 
\item[$(b)$] $CX[0, 1] \not =\{0\}$ if and only if $\chi_{[a,1]}\in X$ for some $0 < a <1$. 
\item[$(c)$] $\widetilde{X}\not =\{0\}$ if and only if $X$ contains a nonzero, nonincreasing function on $I$. 
\item[$(d)$] If $X$ has the Fatou property, then $CX$ and $\widetilde{X}$ have the Fatou property.
\item[$(e)$] $(\widetilde{X})_a = \{0\}$.
\end{itemize}
\end{theorem}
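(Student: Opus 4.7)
The plan is to treat the five parts in order of increasing difficulty. The general Banach ideal structure is routine: the subadditivities $C|f+g|\le C|f|+C|g|$ and $\widetilde{f+g}\le\widetilde{f}+\widetilde{g}$, together with monotonicity of both operators under $|f|\le|g|$, deliver the triangle inequality and the ideal property; positive definiteness follows from $|f|\le\widetilde{f}$ a.e.\ and from the fact that $C|f|=0$ a.e.\ forces $\int_0^x|f|=0$ for a.e.\ $x$, hence $f=0$; completeness is obtained by the standard absolutely-summable test, transferring Fatou-type convergence from $X$. I would not belabor these verifications.

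For (a) and (b) the forward directions rest on the same estimate. If $0\ne f\in CX$, pick $a>0$ with $\kappa:=\int_0^a|f|>0$; then $C|f|(x)\ge\kappa/x$ for all $x\ge a$, so the ideal property of $X$ immediately yields $\tfrac{1}{x}\chi_{[a,\infty)}\in X$ on $[0,\infty)$, and $\chi_{[a,1]}\in X$ on $[0,1]$ (using $1/x\ge 1$ there). For the converses I would produce explicit nonzero members of $CX$: on $[0,\infty)$ the function $f=\chi_{[a,2a]}$ satisfies $C|f|(x)\le (a/x)\chi_{[a,\infty)}(x)$, while on $[0,1]$ the function $f=\chi_{[a,1]}$ satisfies $C|f|\le\chi_{[a,1]}$.

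Parts (c) and (d) are essentially tautological. In (c), $\widetilde{f}$ is itself a nonincreasing function, and is nonzero when $f$ is, because $|f|\le\widetilde{f}$; conversely $\widetilde g=|g|$ for nonincreasing $g$. For (d), the Fatou property of $X$ transfers once one observes monotone convergence in both constructions: $C|f_n|\nearrow C|f|$ by Beppo-Levi, and $\widetilde{f_n}\nearrow\widetilde{f}$ (the only nontrivial direction, $\widetilde{f}\le\lim\widetilde{f_n}$, follows because for a.e.\ $t\ge x$ one has $f(t)=\lim f_n(t)\le\lim\widetilde{f_n}(x)$).

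The main obstacle is (e): given any $0\ne f\in\widetilde X$, I must exhibit a sequence $A_n\downarrow\emptyset$ for which $\|f\chi_{A_n}\|_{\widetilde X}$ stays bounded away from zero. My strategy is to concentrate $A_n$ in the essential right tail of a superlevel set of $|f|$. Fix $c>0$ with $E:=\{t\in I:|f(t)|>c\}$ of positive measure (possible because $\widetilde f(0)>0$) and set $s:=\inf\{y\ge 0:|E\cap[y,\infty)|=0\}\in(0,\infty]$. I take $A_n:=E\cap[s-1/n,\,s)$ when $s<\infty$, and $A_n:=E\cap[n,\infty)$ when $s=\infty$ (only possible for $I=[0,\infty)$); in both cases $A_n$ is strictly nested, of positive measure, and has empty intersection. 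Since $|f|>c$ on $A_n$ and $A_n$ reaches arbitrarily close to the essential right endpoint of the support of $|f|$, one obtains $\widetilde{f\chi_{A_n}}\ge c\chi_{[0,\,s-1/n]}$ in the first case and $\widetilde{f\chi_{A_n}}\ge c\chi_I$ in the second. Both characteristic functions lie in $X$ (they are dominated by $\widetilde{f}/c$) and their $X$-norms admit a uniform positive lower bound (by $\|\chi_{[0,s/2]}\|_X$ for $n\ge 2/s$, respectively $\|\chi_I\|_X$), so $\|f\chi_{A_n}\|_{\widetilde X}$ is bounded away from zero and $f\notin(\widetilde X)_a$.
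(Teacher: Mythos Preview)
Your proof is correct and follows essentially the same route as the paper. In particular, for (e) the paper also fixes a superlevel set of $|f|$, removes an initial interval so that the remainder $A$ lies to the right of a fixed point $b/2$, and then nests arbitrary $A_n\subset A$ of positive measure to obtain the uniform lower bound $\widetilde{f\chi_{A_n}}\ge\frac{a}{2}\chi_{[0,b/2]}$; your construction of $A_n$ near the essential right endpoint $s$ of $E$ is simply a more explicit realization of the same idea.
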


\proof (a). Suppose that $\frac{1}{x}\chi_{[a,\infty)}(x)\in X$ for some $a > 0$. For any $b > a$ we have 
\begin{eqnarray*}
\| \chi_{[a, b]} \|_{CX} 
&=& 
\| \frac{1}{x} \int_0^x \chi_{[a, b]}(t)\, dt \|_X =
\| \frac{x-a}{x} \chi_{[a, b]}(x) +  \frac{b-a}{x} \chi_{(b, \infty)}(x) \|_X \\
&\leq& 
 \| \frac{b-a}{x} \chi_{[a, \infty)}(x) \|_X = (b-a) \| \frac{1}{x} \chi_{[a, \infty)}(x) \|_X < \infty,
\end{eqnarray*}
whence $\chi_{[a, b]} \in CX$.

If $CX \neq \{0\}$, then there exists $0 \neq f \in CX$, that is, $|f(x)| > 0$ for $x \in A$ with $0 < m(A) < \infty$, and we 
can find $a > 0$ such that $b = \int_0^a |f(t)| \, dt > 0$. Thus, 
\begin{eqnarray*}
\frac{b}{x} \chi_{[a, \infty)}(x)
&\leq&
\frac{1}{x}  \int_0^a |f(t)| \, dt \, \chi_{[a, \infty)}(x) \\
&\leq&
\frac{1}{x}  \int_0^x |f(t)| \, dt \, \chi_{[a, \infty)}(x) \leq C|f|(x) \in X,
\end{eqnarray*}
and so $\frac{1}{x} \chi_{[a, \infty)}(x)  \in X$.

(b) Proof is similar as in (a). However, observe that the condition $\chi_{[a,1]}\in X$ has no weight $w(x) = 1/x$ as in (a).
Proof of (c) is clear. Proof of (d) follows from the facts that if $f_n, f \in L^0(I)$ and $0 \leq f_n \nearrow f$ pointwise on $I$, then
$Cf_n \nearrow Cf$ pointwise on $I$. Also $f_n \chi_{[x, \infty) \cap I} \nearrow f \chi_{[x, \infty) \cap I}$ for every $x \in I$, and so 
$\widetilde{f_n} \nearrow \widetilde{f}$ pointwise on $I$, which implies, by the Fatou property of $X$, that $\| f_n \|_{CX} = \| C f_n \|_{X} \nearrow \| C f \|_{X} 
= \| f \|_{CX}$ and also $\| f_n \|_{\widetilde{X}} = \| \widetilde{f_n}\|_X \nearrow  \| \widetilde{f} \|_X = \| f \|_{ \widetilde{X}}$ .

(e) Suppose $0\not = f \in \widetilde{X}$. Then ${\rm ess} \sup_{x \in I}|f(x)| = a > 0$. It means 
$$
m(\{x \in I: |f(x)| > a/2 \}| = b > 0.
$$ 
In particular, for $A = \{x \in I: |f(x)| > a /2\} \backslash [0,b /2]$ we have $m(A) \geq b /2$. Now, choose a sequence of sets of positive 
measure $(A_n)$ such that $m(\bigcap_{n=1}^{\infty}A_n)=0$, $A_{n+1}\subset A_n \subset A$ for each $n=1, 2, 3, \dots $. Then
$$
\frac{a}{2} \chi _{[0, b/2]}\leq \widetilde{f \chi _{A_n}}, ~{\rm for ~ all} ~~ n=1,2,3, \ldots,
$$ 
and consequently 
$$
\|\frac{a}{2} \chi _{[0, b /2]}\|_X = \|\frac{a}{2} \chi _{[0, b /2]}\|_{\widetilde{X}} \leq \|\widetilde{f \chi _{A_n}}\|_X = \| f \chi _{A_n}\|_{\widetilde{X}},
$$
which means that $f \not \in (\widetilde{X})_a$.
\endproof

\begin{remark}
It is important to notice that there are Banach ideal spaces $X$ for which Ces\`aro space $CX \neq \{0\}$ but it does not contain a weak unit (see Example \ref{przyklad} below). Using the notion of support of the space (more information about this notion can 
be found, for example, in \cite[p. 137]{KA77}, \cite[pp. 169-170]{Ma89} and \cite[pp. 879-880]{KLM13}) we can say even more, 
namely that ${\rm supp} CX \subset {\rm supp}X$ and the inclusion can be essentially strict. On the other hand, in our investigations of the duality of Ces\`aro spaces the 
natural assumption is that Ces\`aro operator is bounded in the given Banach ideal space $X$ which ensures that 
${\rm supp} CX = {\rm supp} X = I$.
\end{remark}

\begin{example}\label{przyklad}
Consider the Banach ideal space $X=L^p(w)$ with $1<p<\infty $ and weight  $w(x) = \max (\frac{1}{1-x},1)$ on $I = [0, \infty)$. Then 
${\rm supp} X = I$ but ${\rm supp} CX = [1, \infty)$. 
\end{example}

\begin{remark}
Ces\`aro spaces $CX$ on $I$ are not symmetric spaces even when $X$ is a symmetric Banach space on $I$ and Ces\`aro operator 
$C$ is bounded on $X$. In fact, it was proved in \cite[Theorem 2.1]{DS07} that $CX \not \hookrightarrow L^1 + L^{\infty}$ for $I = [0, \infty)$ 
 from which it follows that $CX[0, \infty)$ cannot be symmetric and in \cite{AM09} it was shown that $CL^p [0, 1]$ is not symmetric. It seems that $CX$ is never symmetric (even cannot be renormed to be symmetric) but this is only our conjecture.
\end{remark}

\section{Duality on $[0,\infty)$}

The description of K\"othe dual spaces of $Ces_{p}[0, \infty)$ spaces for $1 < p \leq \infty$ appeared as remark in Bennett \cite[p. 124]{Be96}, but it was proved by Astashkin-Maligranda \cite{AM09}. For more general spaces $CX$, where $X$ is a symmetric space having additional properties, it was proved by Kerman-Milman-Sinnamon \cite[Theorem D]{KMS07} and they used in the proof some of Sinnamon's results \cite[Theorem 2.1]{Si03} and \cite[Proposition 2.1 and Lemma 3.2]{Si01}.

In the case of symmetric spaces on $[0,\infty)$ one can simplify the proof of duality theorem from \cite[Theorem D]{KMS07}, using 
the results of Sinnamon (cf. \cite{Si01}, \cite{Si03} and \cite{Si07}). The proof below seems to be simpler, although uses the same ideas. 

\begin{theorem}\label{thm:dual}
Let $X$ be a symmetric space on $[0,\infty)$ with the Fatou property. If $C$ is a bounded operator on $X$, then 
\begin{equation} \label{dual1}
(CX)^{\prime} = \widetilde{X^{\prime}}.
\end{equation}
\end{theorem}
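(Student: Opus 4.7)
The plan is to verify the two inclusions $\widetilde{X'} \hookrightarrow (CX)'$ and $(CX)' \hookrightarrow \widetilde{X'}$ separately, each with its matching norm estimate.

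For $\widetilde{X'} \hookrightarrow (CX)'$, fix $g$ with $\tilde g \in X'$ and $f\in CX$ with $f\geq 0$. Since $|g|\leq \tilde g$, it suffices to bound $\int f\tilde g$. The key is a Sinnamon-type transfer of the nonincreasing weight $\tilde g$ onto the Ces\`aro operator. Integration by parts (equivalently, the layer-cake decomposition of $\tilde g$) yields
\[
\int_0^\infty f(x)\,\tilde g(x)\,dx = \int_0^\infty F(x)\,d(-\tilde g)(x), \qquad F(x)=\int_0^x f = x\,Cf(x).
\]
Using the boundedness of the Copson operator $C^*h(x)=\int_x^\infty h(y)/y\,dy$ on $X'$ (which follows by duality from the boundedness of $C$ on $X$, available since $X$ has the Fatou property), one rewrites the right-hand side as $\int Cf\cdot\psi$ for a positive $\psi$ with $\|\psi\|_{X'}\lesssim\|\tilde g\|_{X'}$. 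H\"older's inequality in the $X$-$X'$ pairing then gives $\int f\tilde g\lesssim\|Cf\|_X\,\|\tilde g\|_{X'}$, hence $\|g\|_{(CX)'}\lesssim\|\tilde g\|_{X'}$.

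For the reverse inclusion $(CX)' \hookrightarrow \widetilde{X'}$, take $g\in (CX)'$ and estimate $\|\tilde g\|_{X'}$ by testing against $h\in X$ with $h\geq 0$, $\|h\|_X\leq 1$. Since $\tilde g$ is already nonincreasing and $X'$ is symmetric, the Hardy--Littlewood inequality $\int \tilde g h\leq \int \tilde g h^*$ combined with $\|h^*\|_X=\|h\|_X$ reduces the problem to the case $h=h^*$. Here the idempotency of $C$ enters: for nonincreasing $h$ the elementary pointwise inequality $h\leq Ch$ together with boundedness of $C$ on $X$ gives
\[
\|h\|_X\leq \|Ch\|_X\leq \|C\|_{X\to X}\,\|h\|_X,
\]
so $h\in CX$ with $\|h\|_{CX}\asymp\|h\|_X$; this is the continuous analogue of the sequence-space idempotency of Bennett, in the simplified form of Curbera--Ricker. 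One then constructs a test function $f\in CX$ supported on points $y(x)\geq x$ where $|g(y(x))|$ is at least a fixed fraction of $\tilde g(x)$, arranged so that $\int fg\gtrsim \int\tilde g\,h$ while $\|Cf\|_X\lesssim\|h\|_X$, and concludes from the definition of $\|g\|_{(CX)'}$.

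The principal obstacle is the construction of the test function $f$ in the second inclusion. Because $\tilde g$ is only an essential supremum, the choice of the auxiliary points $y(x)$ is not canonical; it must be made in a sufficiently measurable and monotone manner that the resulting map $h\mapsto f$ does not inflate the Ces\`aro norm $\|Cf\|_X$. The structural fact that $xCf(x)=\int_0^x f$ is nondecreasing for $f\geq 0$, combined with the idempotency step above, is what ultimately makes the construction possible.
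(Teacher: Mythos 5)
Your two-inclusion plan is the right architecture -- it is how the paper proves the more general Theorem \ref{thm:dualgeneralR}, whereas the paper's own proof of Theorem \ref{thm:dual} instead uses the Fatou property to reduce everything to the identity $(CX)''=(\widetilde{X'})'$ and Sinnamon's identification of the down space $X^{\downarrow}$ with $CX$ -- but both halves have gaps, and the first one is a genuine error. Integration by parts gives $\int_0^\infty f\widetilde g\,dx=\int_0^\infty t\,Cf(t)\,d\nu(t)$ with $\nu=-d\widetilde g$, so your $\psi$ is the density of $t\,d\nu(t)$, i.e.\ the function with $C^*\psi=\widetilde g$. The bound $\|\psi\|_{X'}\lesssim\|\widetilde g\|_{X'}=\|C^*\psi\|_{X'}$ is therefore an \emph{inverse} estimate for the Copson operator; it does not follow from boundedness of $C^*$ on $X'$ and it is false: for $\widetilde g=\chi_{[0,1]}$ the measure $t\,d\nu(t)$ is a point mass at $1$, so $\psi$ does not exist in $L^0$ at all, and smoothing $\widetilde g$ near $1$ makes $\|\psi\|_{L^{p'}}$ blow up while $\|\widetilde g\|_{L^{p'}}$ stays bounded. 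What actually closes this inclusion is a dilation, not $C^*$: either the Curbera--Ricker inequality $Cf(x/a)\le\frac{a}{\ln a}\,CCf(x)$ of Lemma \ref{prop:curbera} combined with the Hardy--Littlewood--P\'olya principle ($u\prec v$ and $\widetilde g$ nonincreasing imply $\int u\widetilde g\le\int v\widetilde g$), or, in your language, the choice $\psi=\frac{1}{\ln a}\sigma_a\widetilde g$, which satisfies $C^*\psi\ge\widetilde g$ pointwise and $\|\psi\|_{X'}\le\frac{a}{\ln a}\|\widetilde g\|_{X'}$ because dilations are bounded on the symmetric space $X'$.

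For the reverse inclusion, the test-function construction you describe and then defer is exactly Sinnamon's identity $\int_I f\,\widetilde g\,dx=\sup_{h\prec f}\int_I |hg|\,dx$ (Proposition \ref{Sin}, quoted from Sinnamon 2003), and it is the entire content of this half: once you have it, $h\prec f$ gives $C|h|\le C|f|$ pointwise, hence $\|h\|_{CX}\le\|f\|_{CX}\le\|C\|_{X\to X}\|f\|_X$, and the inclusion follows in three lines. Your preliminary reductions -- the Hardy--Littlewood rearrangement to $h=h^*$ and the pointwise bound $h\le Ch$ for nonincreasing $h$ -- are not needed and do not make the selection of the points $y(x)$ any easier, since that difficulty is independent of the monotonicity of the test function. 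As written, the second half is an accurate description of the strategy with its key lemma acknowledged but left unproved, and the first half needs to be replaced.
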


\proof
From Theorem 1 spaces $CX$ and $\widetilde{X^{\prime}}$ have the Fatou property. Therefore it is enough to show that 
$$
CX = (CX)^{\prime \prime} = \big (\widetilde{X^{\prime}} \big)^{\prime}.
$$
For $f \in \left(\widetilde{X^{\prime}} \right)^{\prime}$ we have
\begin{eqnarray*}
\|f\|_{(\widetilde{X^{\prime}})^{\prime}} 
&=& 
\sup\{ \int_I | f(x) \, g(x)| \, dx: \, g \in \widetilde{X^{\prime}}, \|g\|_{\widetilde{X^{\prime}}}\leq 1 \} \\
&=& 
\sup\{ \int_I | f(x) \, g(x)| \, dt: \,\widetilde{g} \in X^{\prime}, \| \widetilde{g}\|_{X^{\prime}}\leq 1 \} : = I(f).
\end{eqnarray*}
From one side
$$
 I(f) \leq \sup\{ \int_I | f(x)| \, \widetilde{g}(x) \, dx: \, \widetilde{g} \in X^{\prime}, \| \widetilde{g} \|_{X^{\prime}}\leq 1 \}, 
$$
and on the other hand
\begin{equation*}
I(f) \geq 
\sup\{ \int_I | f(x) \, g(x)| \, dx: \, g = \widetilde{g} \in X^{\prime}, \| \widetilde{g} \|_{X^{\prime}}\leq 1 \},
\end{equation*}
because the supremum on the right is taken over smaller set of functions. Therefore, 
\begin{equation} \label{down}
\|f\|_{(\widetilde{X^{\prime}})^{\prime}}  = \sup\{ \int_I | f(x)| \, h(x) \, dt: 0\leq h \downarrow, \|h\|_{X^{\prime}}\leq 1 \} = : \|f\|_{X^{\downarrow}}.
\end{equation}
The last supremum define the so-called ``down space" $X^{\downarrow}$ (cf. \cite{Si94} and \cite{Si01}) and so
\begin{equation} \label{33}
\|f\|_{(\widetilde{X^{\prime}})^{\prime}} = \|f\|_{X^{\downarrow}}.
\end{equation}
For $I = [0, \infty)$ the identification of the down space $X^{\downarrow}$ with the Ces\`aro space $CX$, provided operator $C$ 
is bounded on the symmetric space $X$, is a consequence of Sinnamon's result \cite[Theorem 3.1]{Si01}. Thus,
\begin{equation} \label{34}
\|f\|_{(\widetilde{X^{\prime}})^{\prime}} = \|f\|_{X^{\downarrow}} \approx \|C|f| \|_{X} = \| f\|_{CX}.
\end{equation}
\endproof

\begin{remark}
Let us mention that the duality of down space $X^{\downarrow}$ for symmetric space $X$ with the help of level functions was investigated by Sinnamon in his papers \cite[Theorem 6.7]{Si94}, \cite[Theorem 5.7]{Si01} and \cite[Theorem 2.1]{Si07}. Another proof of (\ref{33}) can be found in \cite[Theorem 5.6]{Si01}.
\end{remark}

Let us generalize the above theorem to a wider class than symmetric spaces, which will include corresponding isomorphic versions from 
\cite{KK12} and \cite{Ja74}. Our method here is more direct and does not need neither the notion of level functions nor down spaces. 
However, one result of Sinnamon, namely \cite[Theorem 2.1]{Si03} (see also \cite{Si07} for a very nice intuitive graphical explanation 
of this equality) will be necessary. Since the result is given for a general measure on $\mathbb{R}$, we need to reformulate it slightly 
to make it compatible with our notion. 

\begin{proposition}[Sinnamon, 2003] \label{Sin}
Let either $I=[0,\infty)$ or $I=[0,1]$. For a measurable $f, g, h \geq 0$ on $I$ we have
\begin{equation} \label{Sinn}
\int_{I} f(x)\, \widetilde{g}(x) \,dx = \sup_{h \prec f} \int_{I}h(x) \,g(x) \,dx,
\end{equation}
where ${h \prec f}$ means that $\int_{0}^u h(x)dx \leq \int_{0}^u f(x)dx$ for all $u \in I$.
\end{proposition}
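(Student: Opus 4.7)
The equality has two directions; the inequality $\int_I f(x)\widetilde g(x)\,dx \ge \sup_{h \prec f}\int_I h(x)g(x)\,dx$ is the elementary half. For any admissible $h$, I would use $g \le \widetilde g$ a.e.\ together with the observation that $\widetilde g$ is nonincreasing, so its super-level sets are half-intervals $\{\widetilde g > t\} = [0,s(t))$ with $s(t) := \sup\{x\in I : \widetilde g(x) > t\}$. The layer-cake identity then gives
\[
\int_I h g \,dx \le \int_I h\widetilde g\,dx = \int_0^\infty\!\int_0^{s(t)} h\,dx\,dt \le \int_0^\infty\!\int_0^{s(t)} f\,dx\,dt = \int_I f\widetilde g\,dx,
\]
where the last inequality applies $h \prec f$ at $u = s(t)$ for each $t$.

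The nontrivial direction is the reverse inequality. The plan is, for any fixed $\varepsilon \in (0,1)$, to construct an explicit $h_\varepsilon \prec f$ with $\int_I h_\varepsilon g \ge (1-\varepsilon)\int_I f\widetilde g$, so letting $\varepsilon \to 0$ closes the gap. I would make the usual preliminary reductions (truncate $f$ and $g$ to be bounded and supported on a finite interval, and assume $\int_I f\widetilde g < \infty$; these are removed at the end by monotone convergence). Next, decompose the range of $\widetilde g$ geometrically: pick $t_0 = \|\widetilde g\|_\infty$ and $t_j = t_0(1-\varepsilon)^j$ for $j \ge 1$, perturbed slightly if needed so that each $t_j$ avoids the at most countable set of values at which $\widetilde g$ has a jump. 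Setting $s_j := \sup\{x : \widetilde g(x) > t_j\}$, the layers $L_j := [s_{j-1}, s_j)$ partition $\operatorname{supp}\widetilde g$ into thin strips on which $\widetilde g \in (t_j, t_{j-1}]$.

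The key geometric input is the following: by the essential-supremum definition of $\widetilde g$, the set
\[
E_j := \{y \ge s_{j-1} : g(y) > (1-\varepsilon) t_{j-1}\}
\]
has positive measure; and since $g > t_j$ on $E_j$ forces $\widetilde g > t_j$, we have the crucial containment $E_j \subset [s_{j-1}, s_j) = L_j$. I would then define $h_\varepsilon$ layer-by-layer, redistributing $f\chi_{L_j}$ onto $E_j$ via a rightward, measure-preserving rearrangement inside $L_j$. This keeps $\int_0^u h_\varepsilon \le \int_0^u f$ intact for all $u$ by induction over $j$, so $h_\varepsilon \prec f$, while on each $L_j$ we have
\[
\int_{L_j} h_\varepsilon g \ge (1-\varepsilon) t_{j-1}\int_{L_j} f \ge (1-\varepsilon)\int_{L_j} f\widetilde g.
\]
Summing and sending $\varepsilon \to 0$ concludes the proof.

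The main obstacle is the layerwise redistribution: when $f\chi_{L_j}$ has most of its mass located to the right of $E_j$, concentrating $h_\varepsilon$ on $E_j$ can overshoot $\int_0^u f$ just past $E_j$, violating $h_\varepsilon \prec f$. The remedy (and the technical heart of Sinnamon's argument) is to use the \emph{maximal} admissible density on $E_j$, transferring any uncommitted mass onto later layers $L_k$, $k > j$, while maintaining the cumulative majorization at every $u$. The distinction between $I = [0,\infty)$ and $I = [0,1]$ affects only the boundary bookkeeping (truncation of the final layer), not the core argument.
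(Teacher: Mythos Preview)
Your proposal and the paper's proof are doing different things. The paper does \emph{not} prove Sinnamon's identity from scratch: it simply quotes \cite[Theorem~2.1]{Si03}, which is stated for an arbitrary Borel measure $\lambda$ on $\mathbb{R}$, and then specializes by taking $\lambda$ to be Lebesgue measure restricted to $I$ (extended by zero outside). The entire content of the paper's proof is the bookkeeping that $h\prec_\lambda f$ collapses to $h\prec f$ and that $\widetilde{g}$ computed on $\mathbb{R}$ agrees on $I$ with $\widetilde{g\chi_I}$. There is no layer-cake, no redistribution, nothing --- just a change of measure.

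You, by contrast, are attempting to reprove Sinnamon's theorem itself. The easy direction is fine and your layer-cake argument is clean. For the hard direction your geometric layering $L_j=[s_{j-1},s_j)$ and the sets $E_j$ are set up correctly (the inclusion $E_j\subset L_j$ and $m(E_j)>0$ hold for the reasons you give, once $t_j$ is perturbed off the jump set of $\widetilde g$). However, you explicitly flag --- and do not carry out --- the one genuinely nontrivial step: building the rearrangement $h_\varepsilon$ on each $L_j$ so that the cumulative constraint $\int_0^u h_\varepsilon\le\int_0^u f$ survives at every $u$, including inside the layer. Saying ``use the maximal admissible density on $E_j$ and push the remainder to later layers'' names the idea but is not a proof; one must verify that the deferred mass can actually be absorbed downstream without eventually violating $h_\varepsilon\prec f$, and that the layer estimate $\int_{L_j}h_\varepsilon g\ge(1-\varepsilon)\int_{L_j}f\widetilde g$ still holds after this deferral. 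As written, this is a correct outline of Sinnamon's own argument, not a complete proof.

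In short: the paper treats the proposition as a black-box citation plus a two-line reduction; your proposal is (a sketch of) the contents of that black box. If the goal is to match the paper, you should simply invoke \cite[Theorem~2.1]{Si03} and check the measure-restriction compatibility. If the goal is a self-contained proof, the redistribution step needs to be written out in full.
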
 

\proof
From \cite[Theorem 2.1]{Si03} we have for a $\lambda$ - measurable $f, g, h \geq 0$ on $\mathbb{R}$ 

\begin{equation*}
\int_{\mathbb{R}} f\, \widetilde{g} \,d\lambda = \sup_{h \prec_{\lambda} f} \int_{\mathbb{R}} h \,g \,d\lambda,
\end{equation*}
where $h \prec_{\lambda} f$ means that $\int_{-\infty}^u hd\lambda\leq \int_{-\infty}^u fd\lambda $ for all $u \in \mathbb{R}$. 
In the case $I = [0, \infty)$ we put $\lambda$ to be just the Lebesgue measure on $[0,\infty)$ and zero elsewhere. 
Then $h \prec_{\lambda} f$ if and only if $h \prec f$ because for each $u \geq 0$
$$
\int_{-\infty}^u hd\lambda\leq \int_{-\infty}^u fd\lambda  \Longleftrightarrow \int_{0}^u h(x)dx\leq \int_{0}^u f(x)dx. 
$$
Then 
$$
\int_{0}^{\infty} f(x) \widetilde{g}(x) \,dx = \int_{\mathbb{R}} f\, \widetilde{g} \,d\lambda = \sup_{h \prec_{\lambda} f} \int_{\mathbb{R}} h \,g \,d\lambda=\sup_{h \prec f} \int_{0}^{\infty} h(x) \,g(x) \,dx.
$$
In the case of interval $[0,1]$ we put $\lambda$ to be the Lebesgue measure on $[0,1]$ and zero elsewhere. Then
$\widetilde{g}(x)=\widetilde{g\chi_{[0,1]}}(x)$ for $x\in [0,1]$ and the remaining part of the proof works in the same way as before.  
\endproof

\begin{theorem}\label{thm:dualgeneralR}
Let $X$ be a Banach ideal space on $I = [0,\infty)$ such that both the Ces\`aro operator $C$ and the dilation operator $\sigma_\tau$, for 
some $0 < \tau < 1$, are bounded on $X$. Then 
\begin{equation} \label{Thm2}
(CX)^{\prime} = \widetilde{X^{\prime}}  ~~~~ {\it with ~equivalent ~norms.}
\end{equation}
\end{theorem}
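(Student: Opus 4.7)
The plan is to prove the two inclusions $(CX)^{\prime} \hookrightarrow \widetilde{X^{\prime}}$ and $\widetilde{X^{\prime}} \hookrightarrow (CX)^{\prime}$ separately. Sinnamon's identity (Proposition~\ref{Sin}) carries the first inclusion using only that $C$ is bounded on $X$; the dilation hypothesis enters only in the reverse inclusion, via a Copson-operator minorization.

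For the easier direction $(CX)^{\prime} \hookrightarrow \widetilde{X^{\prime}}$, I would fix $f \in (CX)^{\prime}$ and a nonnegative $g$ in the unit ball of $X$, and apply Proposition~\ref{Sin} to $g$ and $|f|$ to obtain $\int_{I} g\, \tilde{f}\, dx = \sup_{h \prec g}\int_{I} h\, |f|\, dx$. Any admissible $h \ge 0$ satisfies $Ch \le Cg$ pointwise (this is literally the definition of $h \prec g$), so $\|h\|_{CX} = \|Ch\|_{X} \le \|Cg\|_{X} \le \|C\|_{X \to X}$; pairing with $f$ gives $\int h\, |f| \le \|C\|_{X \to X}\, \|f\|_{(CX)^{\prime}}$. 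Taking the supremum over $g$ yields $\|\tilde{f}\|_{X^{\prime}} \le \|C\|_{X \to X}\, \|f\|_{(CX)^{\prime}}$.

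For the reverse inclusion, the pointwise bound $|f| \le \tilde{f}$ reduces the task to proving
\[
\int_{0}^{\infty} g(x)\, h(x)\, dx \le c\, \|h\|_{X^{\prime}}\, \|Cg\|_{X}
\]
for arbitrary nonnegative $g$ and nonincreasing $h \ge 0$. My strategy is to find $k \in X^{\prime}$ with $h \le C^{\ast}k$ pointwise and $\|k\|_{X^{\prime}} \le c\, \|h\|_{X^{\prime}}$, where $C^{\ast}k(x) := \int_{x}^{\infty} k(t)/t\, dt$ denotes the Copson operator; the Fubini identity $\int g \cdot C^{\ast}k = \int Cg \cdot k$ together with the H\"older--Rogers inequality \eqref{HRinequality} then finish the estimate at once. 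I propose $k(t) = (\ln(1/\tau))^{-1}\, h(\tau t)$, for which monotonicity of $h$ forces
\[
C^{\ast}k(x) \ge \frac{1}{\ln(1/\tau)}\int_{x}^{x/\tau} \frac{h(\tau t)}{t}\, dt \ge h(x),
\]
since $\tau t \le x$ on that interval and $\int_{x}^{x/\tau} dt/t = \ln(1/\tau)$.

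The main obstacle is controlling $\|k\|_{X^{\prime}}$, and this is exactly where the dilation hypothesis is decisive. A short duality computation based on the change of variables $\int (\sigma_{\tau} f)\, g\, dx = \tau \int f\, (\sigma_{1/\tau} g)\, dy$ converts boundedness of $\sigma_{\tau}$ on $X$ into boundedness of $\sigma_{1/\tau}$ on $X^{\prime}$, with norm at most $\tau^{-1}\|\sigma_{\tau}\|_{X \to X}$. Since $k$ is a constant multiple of $\sigma_{1/\tau} h$, this gives $\|k\|_{X^{\prime}} \le c\, \|h\|_{X^{\prime}}$. Chaining the estimates yields $\|f\|_{(CX)^{\prime}} \le c\, \|\tilde{f}\|_{X^{\prime}}$ with $c$ depending only on $\|C\|_{X \to X}$, $\|\sigma_{\tau}\|_{X \to X}$, and $\tau$, completing the equivalence of norms.
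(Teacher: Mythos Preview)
Your proof is correct. The inclusion $(CX)' \hookrightarrow \widetilde{X'}$ via Sinnamon's identity coincides with the paper's argument verbatim. For the reverse inclusion $\widetilde{X'} \hookrightarrow (CX)'$ you take a variant route: the paper invokes Lemma~\ref{prop:curbera} (the Curbera--Ricker inequality $\int_0^{x/a}|f| \le (\ln a)^{-1}\int_0^x C|f|$) together with the Hardy--Littlewood ordering property $18^o$ from \cite{KPS82}, whereas you bypass both by constructing $k = (\ln(1/\tau))^{-1}\sigma_{1/\tau}h$ with $h \le C^*k$ and using the adjoint identity $\int g\,C^*k = \int Cg\cdot k$. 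After the change of variables the two arguments deliver the identical inequality $\int |f|\,\tilde g \le \frac{a}{\ln a}\int \sigma_{1/a}(C|f|)\cdot \tilde g$ (with $a=1/\tau$) and hence the same embedding constant $\frac{a}{\ln a}\|\sigma_{1/a}\|_{X\to X}$; your duality transfer of $\sigma_\tau$ on $X$ to $\sigma_{1/\tau}$ on $X'$ is exactly equivalent to the paper's direct use of $\sigma_{1/a}$ on $X$. Your version is marginally more self-contained, as it avoids the external citation to \cite{KPS82}, while the paper's formulation makes explicit the ``idempotency'' of $C$ emphasized in the introduction.
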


We start with the continuous version of the inequality, proved for sequences by Curbera and Ricker \cite[Proposition 2]{CR13}.

\begin{lemma}\label{prop:curbera}
If $0 \leq f\in L_{loc}^1[0,\infty)$ and $a > 1$ is arbitrary, then
\begin{equation} \label{Lemat1}
\int_0^{x/a} f(t) \, dt \leq \frac{1}{\ln a} \int_0^x \left(\frac{1}{t} \int_0^t f(s)\, ds\right) dt ~~{\it for ~all} ~~x > 0,
\end{equation}
that is, $Cf(x/a) \leq \dfrac{a}{\ln a} \, CCf(x)$ for all $x > 0$.
\end{lemma}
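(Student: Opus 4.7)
The plan is to rewrite the right-hand side via a Fubini-type interchange of integration order, and then exploit a simple pointwise bound on the resulting logarithmic kernel.

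First I would compute the inner double integral by swapping the order of integration. For the nonnegative integrand $f$, Tonelli's theorem gives
\begin{equation*}
\int_0^x \frac{1}{t}\int_0^t f(s)\,ds\,dt \;=\; \int_0^x f(s)\int_s^x \frac{dt}{t}\,ds \;=\; \int_0^x f(s)\,\ln\!\frac{x}{s}\,ds.
\end{equation*}
So the claimed inequality is equivalent to showing
\begin{equation*}
\int_0^{x/a} f(t)\,dt \;\leq\; \frac{1}{\ln a}\int_0^x f(s)\,\ln\!\frac{x}{s}\,ds.
\end{equation*}

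The second step is to observe that the kernel $\ln(x/s)$ is nonnegative on $(0,x]$ and is at least $\ln a$ on the subinterval $(0,x/a]$. Dropping the part of the integral over $(x/a, x)$ (where the integrand is still nonnegative) and using the lower bound on the logarithm yields
\begin{equation*}
\int_0^x f(s)\,\ln\!\frac{x}{s}\,ds \;\geq\; \int_0^{x/a} f(s)\,\ln\!\frac{x}{s}\,ds \;\geq\; (\ln a)\int_0^{x/a} f(s)\,ds,
\end{equation*}
and dividing by $\ln a > 0$ gives the desired bound. The reformulation $Cf(x/a)\leq \tfrac{a}{\ln a}CCf(x)$ then follows by multiplying both sides by $a/x$ and unwinding the definitions of $C$ and $CCf$.

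I do not anticipate a real obstacle here; the only mild subtlety is ensuring that the Fubini swap is justified, which it is because $f\geq 0$ and $f\in L^1_{loc}[0,\infty)$ makes the iterated integrals finite for each fixed $x>0$. The whole argument is essentially just interchanging the order of integration and using the monotonicity of the logarithm, so the proof is short and self-contained; no appeal to the deeper Curbera--Ricker sequence argument is needed beyond the idea of comparing $Cf$ with $CCf$ via this logarithmic gain.
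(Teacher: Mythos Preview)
Your proof is correct and follows essentially the same route as the paper's: interchange the order of integration to obtain the kernel $\ln(x/s)$, then discard the (nonnegative) contribution from $(x/a,x)$ and use $\ln(x/s)\ge\ln a$ on $(0,x/a]$. The only cosmetic difference is that you invoke Tonelli explicitly, which is entirely appropriate here.
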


\proof
For $x > 0$, by the Fubini theorem, we have
\begin{eqnarray*}
\int_0^x \left(\frac{1}{t} \int_0^t f(s)\, ds\right) dt 
&=& 
\int_0^x  f(s) \left(\int_s^x \frac{1}{t}\, dt \right) ds = \int_0^x  f(s) \ln\frac{x}{s}\, ds \\
&=& 
\int_0^{x/a} f(s) \ln\frac{x}{s} \, ds + \int_{x/a}^x f(s) \ln\frac{x}{s} \, ds \\
&\geq& 
\ln a \, \int_0^{x/a} f(s) \, ds,
\end{eqnarray*}
and so 
$$  
CCf(x) = \frac{1}{x} \int_0^x Cf(t) \, dt \geq \dfrac{\ln a}{a} \, Cf(x/a).
$$
\endproof

\begin{remark}
From the classical Hardy inequality and (\ref{Lemat1}) we obtain that if $1 < p \leq \infty$, then 
$$
C^2L^p = C(CL^p) = CL^p.
$$
Such an equality for Ces\`aro sequence spaces $ces_p \, (1 < p < \infty)$ was proved by Bennett (cf. \cite[Theorem 20.31]{Be96}) 
and simplified in \cite{CR13}.
In fact, by the Hardy inequality we have $L^p \overset{p^{\prime}}{\hookrightarrow } CL^p$ 
(cf. \cite{KMP07}) and so $CL^p \overset{p^{\prime}}{\hookrightarrow } CCL^p$. On the other hand, (\ref{Lemat1}) shows that
$$
a^{1/p} \| C|f| \|_{L^p} = \| \sigma_{a}(C|f|)\|_{L^p} \leq \dfrac{a}{\ln a} \| CC|f| \|_{L^p}.
$$
Since $\inf_{a > 1} \dfrac{a^{1-1/p}}{\ln a} = \dfrac{e}{p^{\prime}}$, it follows that $CCL^p \overset{e/p^{\prime}}{\hookrightarrow } CL^p$.
\end{remark}

\proof[Proof of Theorem \ref{thm:dualgeneralR}]
We shall start with the usually simpler inclusion $\widetilde{X^{\prime}} \hookrightarrow (CX)^{\prime}$. Taking the substitution 
$t = a u$ in the right integral of (\ref{Lemat1}) we obtain
\begin{eqnarray*}
\int_0^{x/a} |f(t)| \, dt 
&\leq& 
\frac{a}{\ln a} \int_0^{x/a} \left(\frac{1}{au}\int_0^{au} |f(s)| \, ds\right) du \\
&=& 
\frac{a}{\ln a} \int_0^{x/a} C|f|(a u) \, du
~ ~~{\rm for ~ all} ~~  x>0. 
\end{eqnarray*}
Let $g\in \widetilde{X'}$ and $f \in CX$, then applying the above estimate to the property $18^o$ from page 72 in \cite{KPS82} and by the H\"older-Rogers inequality (\ref{HRinequality}) we obtain
\begin{eqnarray*}
\int_0^{\infty} |f(x) \,g(x)| \, dx
&\leq&
\int_0^{\infty} |f(x)| \,\widetilde{g}(x) \, dx \leq \frac{a}{\ln a} \int_0^{\infty} \,C|f|(ax) \, \widetilde{g}(x)\, dx\\
&\leq&
\frac{a}{\ln a} \| C|f|(ax)\|_X \| \widetilde{g} \|_{X^{\prime}} \leq \frac{a}{\ln a} \| \sigma_{1/a}\|_{X \rightarrow X} \| C|f| \|_X \| \widetilde{g} \|_{X^{\prime}} \\
&=&
\frac{a}{\ln a} \| \sigma_{1/a}\|_{X \rightarrow X} \| f \|_{CX} \| g \|_{\widetilde{X^{\prime}}} ,
\end{eqnarray*}
which means that 
$$
\|g\|_{(CX)^{\prime}}\leq \frac{a}{\ln a} \|\sigma_{1/a}\|_{X\rightarrow X} \|g\|_{\widetilde{X^{\prime}}}
$$
and so $\widetilde{X^{\prime}}  \overset{A}{\hookrightarrow } (CX)^{\prime}$ with $A = \frac{a}{\ln a} \|\sigma_{1/a}\|_{X\rightarrow X}$ 
and $a > 1$.

We can now turn our attention into, usually more difficult, second inclusion $(CX)'\hookrightarrow \widetilde{X^{\prime}}$. 
Let $g\in (CX)^{\prime}$ and $f \in X$, then by (\ref{Sinn}) in Proposition \ref{Sin}, the generalized H\"older-Rogers inequality 
and using the assumption that operator $C$ is bounded on $X$ we obtain
\begin{eqnarray*}
\int_{0}^{\infty} |f(x)| \, \widetilde{g}(x) \,dx 
&=& 
\sup_{|h| \prec |f|} \int_{0}^{\infty} |h(x) g(x)| \,dx \leq \sup_{|h|\prec |f|}\|h\|_{CX}\| g \|_{(CX)^{\prime}} \\
&\leq& 
\|f\|_{CX} \| g \|_{(CX)^{\prime}} = \| C|f| \|_{X} \| g \|_{(CX)^{\prime}}\leq B \|f\|_{X}\|g\|_{(CX)^{\prime}},
\end{eqnarray*}
where $B = \|C\|_{X\rightarrow X}$. Therefore, 
$$
\|g\|_{\widetilde{X^{\prime}}}=\|\widetilde{g}\|_{X^{\prime}}=\sup_{\|f\|_{X}\leq 1} \int_{0}^{\infty} |f(x)|\, \widetilde{g}(x) \,dx \leq B \|g\|_{(CX)^{\prime}}
$$
and thus $(CX)^{\prime}  \overset{B}{\hookrightarrow } \widetilde{X^{\prime}}$.   
\endproof

\begin{remark}
It is worth to notice that in Theorem \ref{thm:dualgeneralR}, the assumption on boundedness of dilation operator was used only for 
the inclusion $ \widetilde{X^{\prime}}  \hookrightarrow (CX)^{\prime}$. On the other hand, the proof of inclusion $(CX)^{\prime} \hookrightarrow \widetilde{X^{\prime}}$ requires only boundedness of $C$ on $X$.
\end{remark}

Some authors considered weighted Ces\`aro operators or Ces\`aro operator $C$ in weighted $L^p(w)$ spaces which leads 
to weighted Ces\`aro spaces $Ces_{p, w}$. These spaces are particular examples of abstract Ces\`aro spaces $CX$, in fact,
$Ces_{p, w} = C\big(L^p(w)\big)$. From Theorem \ref{thm:dualgeneralR} we obtain the following duality result even for more general 
weighted Ces\`aro spaces $C\big(X(w)\big)$.

\begin{corollary}
Let $X$ be a symmetric space on $[0, \infty)$ and $w$ be a weight on $[0, \infty)$ such that the dilation operator $\sigma_a$ (for some 
$0 < a < 1$) and Ces\`aro operator $C$ are bounded on $X(w)$. Then 
$$
\big[C\big(X(w)\big)\big]^{\prime} = \widetilde{\big( X^{\prime}(\frac{1}{w})\big)}.
$$
\end{corollary}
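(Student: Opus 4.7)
The plan is to recognize this corollary as a direct specialization of Theorem \ref{thm:dualgeneralR}, with the ambient Banach ideal space taken to be $Y := X(w)$ rather than $X$ itself. First I would verify that $Y = X(w)$ falls within the scope of Theorem \ref{thm:dualgeneralR}: since $X$ is symmetric (hence in particular a Banach ideal space) and $w$ is a weight, the weighted space $X(w)$ is a Banach ideal space on $[0,\infty)$, and it inherits a weak unit from $X$ (if $f_0>0$ a.e.\ lies in $X$, then $f_0/w$ is a weak unit in $X(w)$ because $\|f_0/w\|_{X(w)} = \|f_0\|_X<\infty$). The two remaining hypotheses, boundedness of $C$ and of $\sigma_a$ on $Y$, are assumed outright in the statement.

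Applying Theorem \ref{thm:dualgeneralR} to $Y$ then gives $(CY)' = \widetilde{Y'}$ with equivalent norms. The remainder of the argument is purely bookkeeping: unfold the definition of $Y$ on both sides. On the left, $CY = C(X(w))$ by the very definition \eqref{Cesaro} of the abstract Ces\`aro construction. On the right, the standard weighted duality identity \eqref{dualweight} gives $Y' = [X(w)]' \equiv X'(1/w)$, whence $\widetilde{Y'} = \widetilde{\bigl(X'(1/w)\bigr)}$. Putting these together yields the claimed equality
$$\bigl[C(X(w))\bigr]' = \widetilde{\bigl(X'(\tfrac{1}{w})\bigr)}$$
with equivalent norms.

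There is essentially no obstacle here: the point of the corollary is conceptual rather than technical, namely that the symmetry of $X$ is never actually used in the proof of Theorem \ref{thm:dualgeneralR} beyond being a Banach ideal space, and therefore the duality formula survives any weighting for which the two required operator bounds continue to hold. In particular, the specialization to $X = L^p$ recovers the duality of the weighted Ces\`aro space $Ces_{p,w} = C(L^p(w))$.
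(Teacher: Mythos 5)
Your proposal is correct and is exactly the derivation the paper intends: the corollary is stated as an immediate consequence of Theorem \ref{thm:dualgeneralR} applied to the Banach ideal space $X(w)$, combined with the identity $[X(w)]^{\prime}\equiv X^{\prime}(\frac{1}{w})$ from (\ref{dualweight}). Your side remark that the symmetry of $X$ plays no role is also consistent with the paper, which only needs $X(w)$ to be a Banach ideal space on which $C$ and $\sigma_a$ are bounded.
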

It is not clear if our approach includes all weights from \cite{KK12} (with equivalent norms) but for the power weight $w(x) = x^{\alpha}$ 
with $\alpha < 1-1/p$ and $1 < p< \infty$ we obtain
$$
[Ces_{p, x^{\alpha}}]^{\prime} = \big[C\big(L^p(x^{\alpha})\big)\big]^{\prime} =  \widetilde{L^{p^{\prime}}(x^{-\alpha})},
$$
since $C$ is bounded in $L^p(x^{\alpha})$ with the norm $\| C \| = (1 - \alpha - 1/p)^{-p}$ (cf. \cite[p. 245]{HLP52} or \cite[p. 23]{KMP07})
and $\sigma_\tau$ has norm $\| \sigma_\tau\| = \tau^{1/p + \alpha}$.


\section{\protect \medskip Duality on $[0,1]$}

The duality of Ces\`aro spaces on $I = [0,1]$ is more delicate and less known. Astashkin-Maligranda \cite[Theorem 3]{AM09} 
proved that for $1 < p < \infty$ we have $(Ces_p)^{\prime} = U(p^{\prime}): = \widetilde {L^{p^{\prime}}(\frac{1}{1-x})}$, where 
$f \in \widetilde {L^{p^{\prime}}(\frac{1}{1-x})}$ means that $\widetilde{f} \in L^{p^{\prime}}(\frac{1}{1-x})$ with the norm
$$
\| f\|_{U(p^{\prime})} = \left[ \int_0^1\Big(\frac{ \widetilde{f}(x)}{1-x} \Big)^{p^{\prime}} dx \right]^{1/p}.
$$
The proof of inclusion $U(p^{\prime}) {\hookrightarrow } (Ces_p)^{\prime}$ required improvement of the Hardy inequality, which 
they gave in \cite[inequality (21)]{AM09}: if $1 < p < \infty$, then $C: L^p(1-x) \rightarrow L^p$ is bounded, that is,
\begin{equation} \label{AM}
\| Cf \|_{L^p} \leq A_p \, \| (1-x) f(x) \|_{L^p} ~~ {\rm for ~all} ~~ f \in L^p(1-x),
\end{equation}
with $A_p \leq 2 (p^{\prime} + 2p)$. Their proof gives even more general result, which we will use later on. Let us present proof 
for the case of symmetric spaces on $[0, 1]$. We will need in the proof Copson operator which is defined by formula 
$C^*f(x) = \int_x^1 \frac{f(t)}{t} \,dt$.

\begin{lemma}[Astashkin-Maligranda, 2009] \label{AM09}
If $X$ is a symmetric space on $I = [0, 1]$ and both operators $C, C^*: X \rightarrow X$ are bounded, 
then $C: X(1-x) \rightarrow X$ is also bounded.
\end{lemma}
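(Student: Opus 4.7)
The plan is, after reducing to $f\ge 0$ (since $|Cf|\le C|f|$), to set $g(t):=(1-t)\,f(t)$ so that $f(t)=g(t)/(1-t)$ and the desired inequality $\|Cf\|_X\le K\,\|(1-x)f\|_X$ reads
$$
\Bigl\|\,\tfrac{1}{x}\!\int_0^x\!\tfrac{g(t)}{1-t}\,dt\,\Bigr\|_X\;\le\;K\,\|g\|_X .
$$
The heart of the argument is the pointwise majorisation
$$
\frac{1}{x}\int_0^x\frac{g(t)}{1-t}\,dt\;\le\;2\,Cg(x)\;+\;2\,\bigl(C^{*}(Rg)\bigr)(1-x)\qquad\text{for a.e. }x\in[0,1],
$$
where $(Rh)(x):=h(1-x)$ is the reflection of $[0,1]$ about its midpoint. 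Once this is in hand, taking $X$-norms and using (a) the triangle inequality, (b) the symmetry of $X$, which makes $R$ an isometry because $Rh$ and $h$ are equimeasurable, and (c) the boundedness hypotheses on $C$ and $C^{*}$, I would conclude
$$
\|Cf\|_X\;\le\;2\,\bigl(\|C\|_{X\to X}+\|C^{*}\|_{X\to X}\bigr)\,\|g\|_X\,.
$$

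To prove the pointwise bound I would split the $t$-integration at $t=1/2$ in order to isolate the singularity of the weight $1/(1-t)$ near $t=1$. On the regular piece $t\in[0,\min(x,1/2)]$ one has $1/(1-t)\le 2$, and (after possibly enlarging the integration to $[0,x]$) division by $x$ produces the clean bound $2\,Cg(x)$. The delicate piece is the singular one, $x^{-1}\!\int_{1/2}^{x}g(t)/(1-t)\,dt$, which arises only when $x\ge1/2$. Here I would perform the change of variables $s=1-t$: this turns the singular weight $1/(1-t)$ into the Copson kernel $1/s$, changes the limits from $t\in[1/2,x]$ to $s\in[1-x,1/2]$, and replaces $g(t)$ by $g(1-s)=Rg(s)$. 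Enlarging the integration to $s\in[1-x,1]$ then identifies this integral with $(C^{*}(Rg))(1-x)$, and the prefactor $1/x\le 2$ (since $x\ge1/2$) delivers the claimed second term.

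I expect the main obstacle to be precisely this identification: spotting that the singular behaviour of $1/(1-t)$ near $t=1$ is the reflected image of the singular behaviour of the Copson kernel $1/s$ near $s=0$, which is exactly why the hypothesis of the lemma has to involve $C^{*}$ in addition to $C$. Everything else---the two-region split, the triangle inequality, and the isometry property of $R$ on a symmetric space---is routine bookkeeping. The resulting constant $2\bigl(\|C\|_{X\to X}+\|C^{*}\|_{X\to X}\bigr)$ recovers, with an even sharper coefficient, the Astashkin--Maligranda bound $A_p\le 2(p'+2p)$ in the case $X=L^p$, since $\|C\|_{L^p\to L^p}=p'$ and $\|C^{*}\|_{L^p\to L^p}=p$.
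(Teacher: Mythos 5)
Your proof is correct and follows essentially the same route as the paper's: both arguments exploit that the weight $1-t$ is bounded away from $0$ on $[0,1/2]$ to produce the $C(fw)$ term, and both use the reflection $t\mapsto 1-t$ together with the identity $w(1-t)=t$ to convert the contribution near $t=1$ into the Copson operator, with symmetry of $X$ making the reflection an isometry. The only (cosmetic) difference is that you split the integration variable $t$ at $1/2$ while the paper splits by cases on $x$; both give the constant $2(\|C\|_{X\to X}+\|C^{*}\|_{X\to X})$.
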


\begin{proof}
Let $w(x) = 1 - x, f(x) \geq 0$ for $x \in I$. Then for $0 < x \leq 1/2$
$$
Cf(x) = \frac{1}{x} \int_0^x f(t)\, dt \leq \frac{2}{x} \int_0^x f(t) w(t)\, dt = 2 \, C(fw)(x),
$$
and for $1/2 \leq x \leq 1$
\begin{eqnarray*}
Cf(x)
&\leq&
2 \int_0^x f(t)\, dt = 2 \int_{1-x}^1 f(1-t)\, dt \\
&=& 
2 \int_{1-x}^1 \frac{f(1-t) w(1-t)}{t}\, dt = 2 \, C^*\big( ~ \overline {fw} ~ \big)(1-x),
\end{eqnarray*}
where $ \overline {fw} (t) = f(1-t) w(1-t)$. Therefore,
$$
\| Cf \|_X \leq 2 \, \| C(fw) \|_X + 2 \, \| C^*(\overline {fw}) \|_X \leq 2 \, ( \| C \|_{X \rightarrow X} + \| C^* \|_{X \rightarrow X}) \, \| f w \|_X.
$$
\end{proof}

We shall prove that $(CX)^{\prime} = \widetilde {X^{\prime}(\frac{1}{1-x})}$ under some assumptions on the space $X$ but each inclusion 
will be proved separately.

\begin{theorem}\label{thm:dualgeneralon01b}
Let $X$ be a Banach ideal space on $I = [0,1]$ such that the operator $C: X(1-x) \rightarrow X$ is bounded. Then 
\begin{equation} \label{Thm4}
(CX)^{\prime} \hookrightarrow \widetilde {X'\big(\frac{1}{1-x}\big)}.
\end{equation}
\end{theorem}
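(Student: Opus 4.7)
The plan is to mimic the second (harder) inclusion from the proof of Theorem \ref{thm:dualgeneralR}, but to route the test functions through the weighted space $X(1-x)$ rather than $X$. This way, the hypothesis $C\colon X(1-x)\to X$ bounded takes the place of $C\colon X\to X$ bounded from the $[0,\infty)$ case.

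First, given $g\in (CX)^\prime$, I would rewrite the target norm using the weighted--duality identity \eqref{dualweight}:
\begin{equation*}
\|g\|_{\widetilde{X^\prime(1/(1-x))}}
= \|\widetilde{g}\|_{X^\prime(1/(1-x))}
= \|\widetilde{g}\|_{[X(1-x)]^\prime}
= \sup_{\|\phi\|_{X(1-x)}\leq 1}\int_0^1 |\phi(x)|\,\widetilde{g}(x)\,dx.
\end{equation*}
Equivalently, the substitution $\phi = f/(1-x)$ turns $\sup_{\|f\|_X\le 1}\int |f(x)|\widetilde{g}(x)/(1-x)\,dx$ into the displayed expression; this is the key reformulation that makes the weight $1/(1-x)$ on $g$ correspond to the weight $(1-x)$ on the test function.

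Second, I would apply Sinnamon's Proposition \ref{Sin} on $[0,1]$ to the nonnegative pair $(|\phi|,|g|)$ to write
\begin{equation*}
\int_0^1 |\phi(x)|\,\widetilde{g}(x)\,dx \;=\; \sup_{h\prec|\phi|}\int_0^1 h(x)\,|g(x)|\,dx,
\end{equation*}
and then estimate, for every admissible $h\geq 0$ with $h\prec |\phi|$, via the generalized H\"older--Rogers inequality \eqref{HRinequality}:
\begin{equation*}
\int_0^1 h(x)\,|g(x)|\,dx \;\leq\; \|h\|_{CX}\,\|g\|_{(CX)^\prime} \;=\; \|Ch\|_X\,\|g\|_{(CX)^\prime}.
\end{equation*}
The condition $h\prec|\phi|$ is precisely what is designed to control $Ch$: since $\int_0^x h \leq \int_0^x |\phi|$ for all $x$, we get the pointwise bound $Ch(x)\le C|\phi|(x)$, and hence by the ideal property and the hypothesis,
\begin{equation*}
\|Ch\|_X \;\leq\; \|C|\phi|\|_X \;\leq\; \|C\|_{X(1-x)\to X}\,\|\phi\|_{X(1-x)} \;\leq\; \|C\|_{X(1-x)\to X}.
\end{equation*}

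Combining these three displayed inequalities and taking the supremum over $\phi$ yields
\begin{equation*}
\|g\|_{\widetilde{X^\prime(1/(1-x))}} \;\leq\; \|C\|_{X(1-x)\to X}\,\|g\|_{(CX)^\prime},
\end{equation*}
which is exactly \eqref{Thm4}. I do not anticipate a serious obstacle: Sinnamon's proposition is tailor-made for the replacement of $\widetilde{g}$ by $g$ paired against a majorant, and the weighted--duality identity \eqref{dualweight} exactly converts the $1/(1-x)$ on the $X^\prime$ side into the $(1-x)$ weight needed to trigger the hypothesis $C\colon X(1-x)\to X$. The only point deserving care is the order of operations in the first paragraph: one must use the weighted duality \eqref{dualweight} (valid because $X$ is a Banach ideal space) before applying Sinnamon, so that the level-set condition $h\prec|\phi|$ meets a test function $\phi$ which is already normalized in $X(1-x)$ and not in $X$.
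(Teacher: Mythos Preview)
Your proposal is correct and follows essentially the same route as the paper's own proof: both use the weighted duality $[X(1-x)]^\prime \equiv X^\prime(1/(1-x))$ to set up the sup over $\|\phi\|_{X(1-x)}\le 1$, apply Sinnamon's Proposition~\ref{Sin} to replace $\widetilde g$ by $g$, use H\"older--Rogers in $CX$, bound $\|h\|_{CX}\le \|\phi\|_{CX}$ via $h\prec|\phi|$, and finish with the hypothesis $C\colon X(1-x)\to X$. The only cosmetic differences are notation ($\phi$ versus $f$) and that you spell out the pointwise inequality $Ch\le C|\phi|$ explicitly, while the paper absorbs this into the single step $\sup_{|h|\prec|f|}\|h\|_{CX}\le\|f\|_{CX}$.
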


\proof
Let $g\in (CX)^{\prime}$ and $f \in CX$, then using Proposition \ref{Sin} and applying our assumption in the last inequality we get
\begin{eqnarray*}
\int_0^1 |f(x)| \, \widetilde{g}(x) \,dx 
&=& 
\sup_{|h| \prec |f|} \int_0^1 |h(x) g(x)| \,dx \leq \sup_{|h|\prec |f|}\|h\|_{CX}\| g \|_{(CX)^{\prime}} \leq \|f\|_{CX} \| g \|_{(CX)^{\prime}} \\
&=& 
\| Cf \|_{X} \| g \|_{(CX)^{\prime}}\leq D \| (1-x) f(x) \|_{X}\|g\|_{(CX)^{\prime}},
\end{eqnarray*}
where $D = \|C\|_{X(1-x)\rightarrow X}$. Since $[X(1-x)]^{\prime} \equiv X^{\prime}(\frac{1}{1-x})$, it follows that
$$
\|g\|_{\widetilde{X'(\frac{1}{1-x})}} = \sup_{\|f\|_{X(1-x)}\leq 1} \int_0^1 |f(x) \, \widetilde{g}(x)| \,dx \leq D\, \|g\|_{(CX)^{\prime}},
$$
thus $(CX)^{\prime} \overset{D}{\hookrightarrow } \widetilde{X^{\prime}(\frac{1}{1-x})}$ with $D = \|C\|_{X(1-x)\rightarrow X}$.   
\endproof

\begin{theorem}\label{thm:dualgeneralon01a}
If $X$ is a symmetric space on $I=[0,1]$ with the Fatou property, then  
\begin{equation} \label{Thm5}
\widetilde{X^{\prime} \big(\frac{1}{1-x}\big)}\hookrightarrow (CX)^{\prime}.
\end{equation}
\end{theorem}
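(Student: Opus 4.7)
Given $g\in \widetilde{X^{\prime}(1/(1-x))}$ and $f\in CX$, the aim is the bilinear estimate
$$\int_0^1 |f(x)g(x)|\,dx \le K\,\|Cf\|_X\,\|\widetilde g/(1-x)\|_{X'}$$
with $K$ depending only on $X$. Passing to absolute values and replacing $g$ by $\widetilde g$ (which only increases the left-hand side since $|g|\le\widetilde g$), I may assume $f,g\ge 0$ and $g=\widetilde g$ is nonincreasing. Moreover, since $1/(1-x)\notin X'$ for typical $X$, the membership $\widetilde g/(1-x)\in X'$ forces $\widetilde g(1)=0$, which I assume from the outset.

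The most immediate attempt is to split $\widetilde g=(1-x)\cdot [\widetilde g/(1-x)]$ and apply the H\"older--Rogers inequality to get
$$\int_0^1 f\widetilde g\,dx\le \|(1-x)f\|_X\,\|\widetilde g/(1-x)\|_{X'},$$
which would finish the proof provided $\|(1-x)f\|_X\le K\|Cf\|_X$. Unfortunately this pointwise-weighted inequality fails in general: for $X=L^p$ and $f=\chi_{[1/2-\varepsilon,\,1/2+\varepsilon]}$, an elementary computation gives $\|(1-x)f\|_{L^p}\sim\varepsilon^{1/p}$ while $\|Cf\|_{L^p}\sim \varepsilon$, so the ratio blows up as $\varepsilon\to 0$. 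A more subtle argument is required.

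My plan is to first apply Fubini (equivalently, the layer-cake decomposition used in Proposition~\ref{Sin}) to the identity $\widetilde g(x)=\mu([x,1])$, where $\mu=-d\widetilde g$ is the positive Borel measure on $[0,1]$ determined by the nonincreasing function $\widetilde g$. This yields
$$\int_0^1 f\widetilde g\,dx=\int_0^1 \left(\int_0^t f(x)\,dx\right)d\mu(t)=\int_0^1 t\,Cf(t)\,d\mu(t),$$
so that the question reduces to proving
$$\int_0^1 t\,Cf(t)\,d\mu(t)\le K\,\|Cf\|_X\,\|\widetilde g/(1-x)\|_{X'}.$$
This reformulation sidesteps the failing estimate on $(1-x)f$ and instead pairs $Cf\in X$ against a functional encoding $\widetilde g$ via the measure $\mu$.

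To handle the reduced inequality I would proceed in two steps, matching the two appendices flagged in the introduction. \emph{First}, on the concrete scale $X=L^p(x^\alpha)$ I would verify the estimate directly via the improvement of the weighted Hardy inequality from Appendix~B, which controls $Cf$ in $L^p(x^\alpha)$ by the appropriately weighted norm of $(1-x)f$, and by duality produces the desired bilinear bound with an explicit constant $K(p,\alpha)$. \emph{Second}, I would invoke the weighted version of the Calder\'on--Mitjagin interpolation theorem from Appendix~A: together with the Fatou property (which gives $X=X''$ and places $X$ inside an interpolation scale between two weighted $L^p$ endpoints), this should lift the $L^p(x^\alpha)$ estimate to the desired bound for every symmetric $X$ with the Fatou property. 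The main obstacle I anticipate is this second step: one must recast the bilinear inequality as the boundedness of a concrete (sub)linear operator between weighted symmetric spaces so that the hypotheses of the weighted Calder\'on--Mitjagin theorem genuinely apply and the interpolated target norm comes out exactly to $\|\widetilde g/(1-x)\|_{X'}$, rather than a weaker dual norm.
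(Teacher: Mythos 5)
Your reduction via the layer--cake formula is sound (for nonincreasing $g$ with $g(1)=0$ one indeed has $\int_0^1 f\widetilde g\,dx=\int_0^1 t\,Cf(t)\,d\mu(t)$ with $\mu=-d\widetilde g$), and your observation that the naive H\"older splitting $\widetilde g=(1-x)\cdot\widetilde g/(1-x)$ fails is correct. But the reduced inequality is exactly where the whole difficulty lives, and your plan for it does not close the gap. The essential point, which is absent from your proposal, is a comparison between $\int_0^t f\,dx$ and $\int_0^t \frac{Cf(x)}{1-x}\,dx$. These two quantities are \emph{not} comparable at the same $t$; the paper's Lemma~\ref{lem:curberaon01} (a weighted $[0,1]$-analogue of the Curbera--Ricker idempotency inequality, proved by the Fubini/logarithm trick) only gives $\int_0^{t/d(t)}f\,dx\le\int_0^t\frac{Cf(x)}{1-x}\,dx$ with the distortion $d(t)=t+e-et$. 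One must then undo this distortion, i.e.\ show that the induced composition operator $Th(t)=h(t/d(t))$ is bounded on $X'(\frac{1}{1-x})$; this is the step for which Appendix~A is actually used (checking the endpoints $L^1(\frac1{1-x})$ and $L^\infty(\frac1{1-x})$ and interpolating). Neither the distorted inequality nor the control of the distortion appears in your argument.

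Your proposed substitute fails on both legs. First, Appendix~B (Theorem~\ref{thm:Hardy}) establishes the boundedness of $C:X(1-x)\to X$ for $X=L^p(x^\alpha)$; by the duality computation in the proof of Theorem~\ref{thm:dualgeneralon01b} this yields the \emph{opposite} inclusion $(CX)'\hookrightarrow\widetilde{X'(\frac{1}{1-x})}$, so it cannot serve as the base case for the embedding you want. Second, the interpolation step is not available in the form you describe: the weighted Calder\'on--Mitjagin theorem of Appendix~A lets you pass from boundedness of a fixed operator on $L^1(w)$ and $L^\infty(w)$ to boundedness on $X(w)$ for symmetric $X$, because every symmetric space with the Fatou property is an interpolation space between $L^1$ and $L^\infty$; a general symmetric $X$ is \emph{not} an interpolation space between two members of the scale $L^p(x^\alpha)$, so an estimate verified only on that scale cannot be lifted to all symmetric $X$. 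Moreover the quantity you would need to interpolate is a bilinear form in which both $X$ and $X'$ vary simultaneously, which does not fit the hypotheses of the theorem. To repair the proof you should restrict to simple nonincreasing $g=\sum_k a_k\chi_{[0,t_k)}$ (your measure $\mu$ is then atomic, and this is the same decomposition), apply Lemma~\ref{lem:curberaon01} to each $\chi_{[0,t_k)}$, and then prove $\|g\|_{X'(\frac1{1-x})}\le M\|g_d\|_{X'(\frac1{1-x})}$ via the operator $T$ and Appendix~A as the paper does.
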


In the proof of Theorem  \ref{thm:dualgeneralon01a} we will need the following result, similar to Lemma \ref{prop:curbera}.

\begin{lemma}\label{lem:curberaon01}
If $\int_0^t | f(x)| dx <\infty$ for each $0<t<1$, then
$$
\int_0^{t/d(t)} |f(x)| dx \leq \int_0^t \frac{1}{1-x} \left(\frac{1}{x} \int_0^x |f(s)| ds \right) dx = \int_0^t \frac{C|f|(x)}{1-x} dt ~~ {\it for ~all} ~ 0 < t < 1,
$$
where $d(t):= t+e-et$.
\end{lemma}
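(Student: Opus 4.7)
The plan is to mimic the Fubini argument of Lemma \ref{prop:curbera}, but with the additional weight $1/(1-x)$. First I would swap the order of integration on the right-hand side:
\begin{align*}
\int_0^t \frac{C|f|(x)}{1-x}\,dx = \int_0^t |f(s)| \int_s^t \frac{dx}{x(1-x)}\,ds,
\end{align*}
and then evaluate the inner integral by partial fractions, using $\frac{1}{x(1-x)} = \frac{1}{x} + \frac{1}{1-x}$, to obtain the closed form
\begin{align*}
\int_s^t \frac{dx}{x(1-x)} = \ln\frac{t(1-s)}{s(1-t)}.
\end{align*}

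The key observation is that the definition $d(t) = t + e - et$ is calibrated precisely so that $s = t/d(t)$ satisfies $\ln\frac{t(1-s)}{s(1-t)} = 1$. Indeed, solving $\frac{t(1-s)}{s(1-t)} = e$ for $s$ gives $t - ts = e s(1-t)$, i.e. $t = s(t + e - et) = s\cdot d(t)$. Moreover, the map $s \mapsto \frac{t(1-s)}{s(1-t)} = \frac{t}{1-t}\cdot\frac{1-s}{s}$ is strictly decreasing on $(0,t)$, so the logarithm above is nonnegative throughout $(0,t)$ and is bounded below by $1$ on the subinterval $(0, t/d(t)]$.

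Combining these two ingredients, I would bound the logarithm below by $1$ on $(0, t/d(t)]$ and discard the tail $(t/d(t), t)$ (where the integrand $|f(s)|\ln\frac{t(1-s)}{s(1-t)}$ is nonnegative):
\begin{align*}
\int_0^t |f(s)| \ln\frac{t(1-s)}{s(1-t)}\,ds \;\geq\; \int_0^{t/d(t)} |f(s)|\,ds,
\end{align*}
which combined with the Fubini identity above is exactly the asserted inequality. There is no real obstacle here: the whole argument hinges on recognizing that $d(t)$ has been reverse-engineered from the equation $\frac{t(1-s)}{s(1-t)} = e$, after which the proof is a routine Fubini–partial-fractions computation wholly analogous to the one used in Lemma \ref{prop:curbera} (with $\ln\frac{x}{s}$ replaced by $\ln\frac{t(1-s)}{s(1-t)}$ and the threshold $x/a$ replaced by $t/d(t)$).
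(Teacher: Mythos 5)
Your proposal is correct and follows essentially the same route as the paper: the same Fubini interchange, the same partial-fraction evaluation of $\int_s^t \frac{dx}{x(1-x)} = \ln\frac{t(1-s)}{s(1-t)}$, and the same observation that $d(t)$ is chosen so that the logarithm equals $1$ at $s = t/d(t)$ and is nonnegative on all of $(0,t)$. Your explicit verification that $s\mapsto \frac{t(1-s)}{s(1-t)}$ is decreasing (hence $\ge e$ on $(0,t/d(t)]$ and $\ge 1$ on $(0,t)$) makes the paper's somewhat terse "it is easy to see" step fully explicit.
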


\proof
Observe that $1 < d(t) < e$ for $0 < t < 1$. By the Fubini theorem we obtain
\begin{eqnarray*}
\int_0^t \frac{C|f|(x)}{1-x} \, dx 
&=& 
\int_0^t \frac{\int_0^x |f(s)| \, ds}{x(1-x)} \, dx = \int_0^t |f(s)| \left(\int_s^t \frac{dx}{x(1-x)} \right) ds \\
&=& 
\int_0^t |f(s)| \left[\int_s^t (\frac{1}{x}  + \frac{1}{1-x}) \, dx \right] ds = \int_0^t |f(s)| \, \ln\frac{t(1-s)}{s(1-t)} \, ds \\
&=&
\int_0^{t/d(t)} |f(s)| \ln\frac{t(1-s)}{s(1-t)} \, ds + \int_{t/d(t)}^t |f(s)| \ln\frac{t(1-s)}{s(1-t)} \, ds.
\end{eqnarray*}
It is easy to see that for $0 <s \leq t/(t+e-et)$ we have $\frac{t(1-s)}{s(1-t)}\geq e$ and of course $\ln \frac{t(1-s)}{s (1-t)} \geq 1$ for each $0<s<1$ so we get  
$$
\int_0^{t/d(t)} |f(x)| \, dx\leq \int_0^t \frac{C|f|(x)}{1-x} \, dx
$$
as required.
\endproof

\proof[Proof of Theorem \ref{thm:dualgeneralon01a}]
Let $0\leq g=\widetilde{g}\in X^{\prime}(\frac{1}{1-x})$ be a simple function. Since $g$ is nonincreasing, $X^{\prime} \hookrightarrow L^1$ 
and $\frac{1}{1-x}\not\in L^1$, we can see that $g$ may be written as 
$$
g=\sum_{k=1}^n a_k \chi_{[0, \,t_k)},
$$
for $0\leq a_k$ and $0 < t_k < 1$ for $k=1,2,...,n$. Define $S$ as a class of all  $g$ of the above form. 
We need to show that there is a constant $M > 0$ such that for each $f\in CX$ and $g \in S$
\begin{equation}
\int_0^1|g(x) f(x)| \, dx \leq M \big \| \frac{g(x)}{1-x} \big \|_{X^{\prime}} \, \big \|C|f| \big\|_X = M \|g\|_{X^{\prime}(\frac{1}{1-x})} \|f\|_{CX}.\label{nierHold1}
\end{equation}
For $d(t)=t+e-et$ denote 
$$
g_d=\sum_{k=1}^n a_k \chi_{[0, \, t_k/d(t_k))}.
$$
Then, by Lemma \ref{lem:curberaon01}, we get 
\begin{eqnarray*}
\int_0^1g_d(x) |f(x)| \, dx 
&=& 
\sum_{k=1}^n a_k \int_0^{t_k/d(t_k)} |f(x)| \, dx \\
&\leq& 
\sum_{k=1}^n a_k \int_0^{t_k} \frac{C|f|(x)}{1-x} \,dx = \int_0^1g(x) \frac{C|f|(x )}{1-x} \, dx.
\end{eqnarray*}
Applying generalized H\"older-Rogers inequality (\ref{HRinequality}) one has 
$$
\int_0^1g_d(x) |f(x)| \, dx \leq \int_0^1g(x)\frac{C|f|(x)}{1-x} \, dx \leq \big \|\frac{g(x)}{1-x} \big\|_{X^{\prime}} \, \big\| C|f| \big\|_X.
$$
Therefore, to get (\ref{nierHold1}) we need to find a constant $M>0$, independent of the choice of $g$, such that 
$\|g\|_{X^{\prime}(\frac{1}{1-x})}\leq M\|g_d\|_{X^{\prime}(\frac{1}{1-x})}$. To do this, let us consider a function 
$\sigma: [0,1] \rightarrow [0,1]$ given by $\sigma(t)=\frac{t}{d(t)}=\frac{t}{t+e-et}$. Then $\sigma^{-1}(t)=\frac{et}{1-t+et}$.
Define the composition operator $T$ on $L^0[0, 1]$ by
$$
Th(t) = h(\sigma(t)).
$$
The key now is to notice that 
$$
T\chi_{[0, \, a)}(t)=\chi_{[0, \, a)}(\sigma(t))=\chi_{[0, \, \sigma^{-1}(a))}(t),
$$
where the last equality is a consequence of equivalence $\sigma(t)=a \Leftrightarrow \sigma^{-1}(a)=t$. 
If therefore $a=x/d(x)$, then 
$$
T\chi_{[0, \, x/d(x))} = \chi_{[0, \, x)}
$$ 
and consequently for $g$ and $g_d$ like above
$$
Tg_d=g.
$$
To complete the proof it is enough to show that $T$ is bounded on $X^{\prime}(\frac{1}{1-x})$. Of course, by the weighted version 
of Calder\'on-Mitjagin interpolation theorem (cf. Appendix A) it is enough to prove its boundedness only on $L^{\infty}(\frac{1}{1-x})$ 
and on $L^1(\frac{1}{1-x})$. Since $\varphi (x)=1-x$ belongs to $L^{\infty}(\frac{1}{1-x})$ and $T$ preserves lattice structure, it 
sufficies to see that $T\varphi\in L^{\infty}(\frac{1}{1-x})$. We have 
$$
T\varphi(x) = \varphi (\sigma(x)) = 1-\sigma(x) = 1 - \frac{x}{x + e - ex} = \frac{(1-x)e}{x+e-ex}
$$
and so
$$
\frac{T\varphi(x)}{1-x} = \frac{e}{x+e-ex}\leq e ~~ {\rm for ~ each} ~~ x \in [0, 1],
$$
which means that 
$$
\|T\|_{L^{\infty}(\frac{1}{1-x})\rightarrow L^{\infty}(\frac{1}{1-x})}\leq e.
$$
On the other hand, for $h\in L^1(\frac{1}{1-x})$
$$
\|Th\|_{L^{1}(\frac{1}{1-x})} = \int_0^1 \frac{h(\sigma(x))}{1-x} dx
$$
and changing variables $\sigma(x)=u$, $dx =\frac{e}{(1-u+eu)^2}du$ we obtain
$$
\|Th\|_{L^{1}(\frac{1}{1-x})} = \int_0^1 \frac{h(u)}{1-u}\frac{e}{1- u+eu}du\leq e\int_0^1 \frac{h(u)}{1-u}du.
$$
Thus, once again 
$$
\|T\|_{L^{1}(\frac{1}{1-x})\rightarrow L^1(\frac{1}{1-x})}\leq e.
$$
To finish the proof notice that $T$ is a bijection and so, in particular, for each $g\in S$, there is $h\in S$ such that $h_d=g$. 
\endproof

\begin{remark}
The proof of Theorem \ref{thm:dualgeneralon01a} is true also for Banach ideal spaces $X$ with the Fatou property in which the above composition operator $T$ is bounded, for example, in weighted $L^p(w)$ spaces when the weight $w$ is increasing or power function 
on $I = (0, 1]$. Lemma \ref{AM09} and so Theorem \ref{thm:dualgeneralon01b} are true in some non-symmetric spaces like weighted 
$L^p$-spaces (see Appendix B). In particular, Theorems \ref{thm:dualgeneralon01b} and \ref{thm:dualgeneralon01a} together with this remark gives that
$$
\left( Ces_{p, x^{\alpha}}\right)^{\prime} = \left[ C\big(L^p(x^{\alpha})\big) \right]^{\prime} 
= \widetilde{\big(L^{p^{\prime}}\big(\frac{1}{x^{\alpha}(1-x)}\big) \big)},
$$
provided $ \alpha < 1-1/p$ and $1 \leq p < \infty$.
\end{remark}

\begin{corollary}
If $X$ is a symmetric space on $[0, 1]$ with the Fatou property such that $C, C^*: X \rightarrow X$ are bounded, then
$$
(CX)^{\prime} = \widetilde{ X^{\prime}\big(\frac{1}{1-x}\big)}.
$$
\end{corollary}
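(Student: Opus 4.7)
The corollary is a direct synthesis of the two inclusion theorems proved in this section, once the hypotheses of Theorem~\ref{thm:dualgeneralon01b} are verified from the boundedness of $C$ and $C^*$ on $X$.

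First I would invoke Theorem~\ref{thm:dualgeneralon01a}: since $X$ is a symmetric space on $[0,1]$ with the Fatou property, this theorem gives, with no further work, the inclusion
$$
\widetilde{X^{\prime}\Big(\tfrac{1}{1-x}\Big)} \hookrightarrow (CX)^{\prime}.
$$
So the entire task reduces to producing the reverse inclusion via Theorem~\ref{thm:dualgeneralon01b}, whose sole hypothesis is the boundedness of $C: X(1-x) \to X$.

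Next I would observe that this hypothesis is precisely the conclusion of Lemma~\ref{AM09}. Indeed, the lemma asserts that for any symmetric space $X$ on $[0,1]$ with both $C$ and $C^{*}$ bounded on $X$, the weighted boundedness $C: X(1-x) \to X$ holds, with an explicit bound $\|C\|_{X(1-x)\to X} \leq 2(\|C\|_{X\to X} + \|C^{*}\|_{X\to X})$. Our hypotheses match the hypotheses of Lemma~\ref{AM09} verbatim, so the weighted Hardy-type estimate is available.

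Finally, applying Theorem~\ref{thm:dualgeneralon01b} with this verified hypothesis yields
$$
(CX)^{\prime} \hookrightarrow \widetilde{X^{\prime}\Big(\tfrac{1}{1-x}\Big)},
$$
and combining the two inclusions gives the equality claimed in the corollary. There is essentially no obstacle here; the work has been done in Lemma~\ref{AM09} and Theorems~\ref{thm:dualgeneralon01b} and \ref{thm:dualgeneralon01a}, and the corollary simply packages them together. If anything, the only point worth writing out explicitly is how the two norms compare: the constants of inclusion are $D = \|C\|_{X(1-x)\to X}$ from Theorem~\ref{thm:dualgeneralon01b} and the constant from Theorem~\ref{thm:dualgeneralon01a} (coming from the boundedness of the composition operator $T$ on $X^{\prime}(\tfrac{1}{1-x})$, which is automatic in the symmetric case by the interpolation argument used there).
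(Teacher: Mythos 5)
Your argument is exactly the one the paper intends: Theorem~\ref{thm:dualgeneralon01a} gives one inclusion directly from the symmetry and Fatou hypotheses, and Lemma~\ref{AM09} supplies the hypothesis $\|C\|_{X(1-x)\to X}<\infty$ needed to apply Theorem~\ref{thm:dualgeneralon01b} for the reverse inclusion. This matches the paper's (implicit) proof of the corollary, so there is nothing to add.
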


\section{ Duality in the sequence case}

Using analogous method we can prove the duality theorem also for Ces\`aro sequence spaces. Only in this section $C$ will stand for 
the {\it discrete Ces\`aro operator $C$}, which is defined on a sequence $x = (x_n)$ of real numbers by
$$
(Cx)_n=\frac{1}{n}\sum_{k=1}^n x_k,\ n\in \mathbb{N}.
$$
We define also the nonincreasing majorant $\widetilde{x}$ of a given sequence $x$ by
$$
(\widetilde{x})_n=\sup_{k \in {\mathbb N}, \, k \geq n} |x_k|,\ n\in \mathbb{N}.
$$
If $X$ is a Banach ideal sequence space, we define an {\it abstract Ces\`aro sequence space} $CX$ as 
$$
CX=\{x\in \mathbb{R}^{\mathbb{N}}: C|x|\in X\} ~~ {\rm with ~the ~norm } ~~ \|x\|_{CX}=\|C|x|\|_{X}.
$$
The space $\widetilde{X}$ is defined as before with evident modification. 
It is not difficult to see that $CX\not = \{0\}$ if and only if $(\frac{1}{n})\in X$. 
Note that if $(\frac{1}{n})\in X_a$, then $\| e_k\|_{CX} = \| C(e_k)\|_X = \| (\frac{1}{n}) \chi_{[k, \infty)}\|_X \rightarrow 0$ as 
$k \rightarrow \infty$, which means that $CX$ is not a symmetric space.

If $x = (x_n)$ and $m \in \mathbb N$, then the dilations $\sigma_m x$ are defined by (cf. \cite[p. 131]{LT79} and \cite[p. 165]{KPS82}):
$$
\sigma_m x = \left( ( \sigma_m x)_n \right)_{n=1}^{\infty} = \big (x_{[\frac{m-1+n}{m}]} \big)_{n=1}^{\infty} 
= \big ( \overbrace {x_1, x_1, \ldots, x_1}^{m}, \overbrace {x_2, x_2, \ldots, x_2}^{m}, \ldots \big).
$$
We have the following duality result.

\begin{theorem}\label{thm:dualsequence}
Let $X$ be an ideal Banach sequence space such that the Ces\`aro operator $C$ is bounded on $X$ and the dilation 
operator $\sigma_{3}$ is bounded on $X^{\prime}$. Then 
\begin{equation} \label{Thms}
(CX)^{\prime} = \widetilde{X^{\prime}}  ~~~~ {\it with ~equivalent ~norms.}
\end{equation}
\end{theorem}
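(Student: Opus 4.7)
The strategy is to mirror the proof of Theorem \ref{thm:dualgeneralR} line by line, with sums replacing integrals and discrete Abel summation in place of integration by parts. Notably the dilation assumption now lives on $X'$ rather than on $X$, which will emerge naturally from a Tonelli exchange.

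\emph{Easier inclusion $(CX)' \hookrightarrow \widetilde{X'}$.} I would apply Proposition \ref{Sin} with $\lambda$ the counting measure on $\mathbb{N}$ (and zero elsewhere) to obtain its sequential analog: for nonnegative sequences $(f_n),(g_n)$,
$$\sum_n f_n\, \widetilde{g}_n = \sup_{h \prec f}\, \sum_n h_n g_n,$$
where $h \prec f$ means $\sum_{k=1}^n h_k \le \sum_{k=1}^n f_k$ for every $n$. Since $h \prec f$ forces $(Ch)_n \le (Cf)_n$ pointwise, one has $\|h\|_{CX} \le \|f\|_{CX}$. Then for $g \in (CX)'$ and $f \in X$ (both nonnegative WLOG),
$$\sum_n f_n\, \widetilde{g}_n = \sup_{h \prec f}\, \sum_n h_n g_n \le \|f\|_{CX}\, \|g\|_{(CX)'} \le \|C\|_{X \to X}\, \|f\|_X\, \|g\|_{(CX)'},$$
and taking supremum over $\|f\|_X \le 1$ gives $\|g\|_{\widetilde{X'}} \le \|C\|_{X \to X}\, \|g\|_{(CX)'}$.

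\emph{Harder inclusion $\widetilde{X'} \hookrightarrow (CX)'$.} I would first establish a discrete Curbera-Ricker inequality directly by Fubini: for nonnegative $(f_j)$ and integer $a \ge 2$,
$$\sum_{k=1}^{am} (Cf)_k = \sum_{j=1}^{am} f_j \sum_{k=j}^{am} \frac{1}{k} \ge \ln a \cdot \sum_{j=1}^{m} f_j,$$
using $\sum_{k=j}^{am} \tfrac{1}{k} \ge \ln(am/j) \ge \ln a$ for $1 \le j \le m$. Writing $F_m := \sum_{j=1}^m f_j$, this reads $F_m \le \tfrac{1}{\ln a} \sum_{k=1}^{am} (Cf)_k$. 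Because $X$ has a weak unit and $C$ is bounded on $X$, we have $(1/n) = Ce_1 \in X$; applied to $\widetilde{g} \in X'$ via H\"older this gives $\sum_n \widetilde{g}_n/n < \infty$, and monotonicity of $\widetilde{g}$ then forces $\widetilde{g}_n \to 0$. Hence $\widetilde{g}_n = \sum_{k \ge n} \Delta_k$ with $\Delta_k := \widetilde{g}_k - \widetilde{g}_{k+1} \ge 0$, and Tonelli yields
$$\sum_n f_n\, \widetilde{g}_n = \sum_k \Delta_k F_k \le \frac{1}{\ln a} \sum_k \Delta_k \sum_{j=1}^{ak}(Cf)_j = \frac{1}{\ln a} \sum_j (Cf)_j\, \widetilde{g}_{\lceil j/a \rceil} = \frac{1}{\ln a} \sum_j (Cf)_j\, (\sigma_a \widetilde{g})_j.$$
Setting $a = 3$, the H\"older-Rogers inequality combined with boundedness of $\sigma_3$ on $X'$ gives
$$\sum_n f_n\, \widetilde{g}_n \le \frac{\|\sigma_3\|_{X' \to X'}}{\ln 3}\, \|f\|_{CX}\, \|g\|_{\widetilde{X'}},$$
which establishes $\widetilde{X'} \hookrightarrow (CX)'$.

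\emph{Main obstacle.} The most delicate step is justifying the telescoping expansion $\widetilde{g}_n = \sum_{k \ge n} \Delta_k$ (and thus the Tonelli exchange), which requires $\widetilde{g}_n \to 0$; as indicated above this reduces to the standing weak unit assumption on $X$ together with boundedness of $C$. The appearance of $\sigma_3$ rather than a general integer dilation is merely cosmetic: the argument goes through whenever some $\sigma_a$, $a \ge 2$ integer, is bounded on $X'$, but $a = 3$ produces the clean constant $1/\ln 3 < 1$.
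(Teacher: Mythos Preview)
Your proof is correct and follows essentially the same strategy as the paper's: Sinnamon's transfer principle with counting measure for the inclusion $(CX)'\hookrightarrow\widetilde{X'}$, and a discrete Curbera--Ricker estimate combined with the dilation $\sigma_3$ acting on $\widetilde g$ in $X'$ for the reverse inclusion. The only technical difference is that you reach the bound $\sum_n f_n\widetilde g_n\le C\sum_n(Cf)_n(\sigma_3\widetilde g)_n$ via Abel summation (hence the auxiliary step $\widetilde g_n\to 0$, which you justify correctly), whereas the paper obtains it by dilating both sequences and invoking the majorization principle (property~$18^o$ in \cite{KPS82}); this yields the paper's constant $4\,\|\sigma_3\|_{X'\to X'}$ in place of your sharper $\|\sigma_3\|_{X'\to X'}/\ln 3$.
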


\proof
For the inclusion $(CX)^{\prime} \hookrightarrow \widetilde{X^{\prime}}$ it is enough to follow the proof of 
Theorem \ref{thm:dualgeneralR}. Let us only notice that the corresponding result of Sinnamon, which is the main contribution there, 
holds also for counting measure on $\mathbb N$. As before, in this part of the proof we need $C$ to be bounded on $X$. 

We will comment the inclusion $(CX)^{\prime} \hookleftarrow \widetilde{X^{\prime}}$ a little more careful just because one cannot 
make the respective substitution in the case of series in place of integral. First of all we can reformulate just slightly the inequality 
from \cite{CR13}. Proving exactly like there one has that for $0\leq x \in \mathbb{R}^{\mathbb{N}}$
$$
\sum_{k=1}^n\frac{1}{k}\sum_{j=1}^kx_j\geq \sum_{j=1}^{[\frac{n+1}{2}]}x_j\sum_{k=j}^n\frac{1}{k}\geq
\frac{1}{4}\sum_{j=1}^{[\frac{n+1}{2}]}x_j,
$$
just because $\sum_{k=[\frac{n+1}{2}]}^n\frac{1}{k}\geq \frac{1}{4}$ for $n \in \mathbb{N}$. Then  
\begin{equation} \label{majorseq}
\sum_{j=1}^{n}x_{[\frac{j+2}{3}]} \leq 
3\sum_{j=1}^{[\frac{n+1}{2}]}x_j\leq 12 \sum_{j=1}^n (Cx)_j,
\end{equation}
where the details of the first estimation are left for the reader. Finally, for $y\in \widetilde{X'}$ and $0\leq x \in CX$, we 
put $\widetilde{y}=b$ and by (\ref{majorseq}) and H\"older-Rogers inequality
\begin{eqnarray*}
\sum_{n=1}^{\infty}|y_nx_n|
&\leq& 
\sum_{n=1}^{\infty}\widetilde{y}_nx_n= \frac{1}{3} \sum_{n=1}^{\infty} b_{[\frac{n+2}{3}]}x_{[\frac{n+2}{3}]} \leq 
4 \sum_{n=1}^{\infty} b_{[\frac{n+2}{3}]}(Cx)_n \\
&\leq& 4 \, \| \sigma_{3}b\|_{X^{\prime}} \| Cx\|_X \leq 4\, \| \sigma_{3}\|_{X'\rightarrow X'} \|b\|_{X^{\prime}} \| Cx\|_X \\
&=& 
4 \, \| \sigma_{3}\|_{X^{\prime}\rightarrow X^{\prime}} \|y\|_{\widetilde{X^{\prime}}} \| x\|_{CX}. 
\end{eqnarray*}
Thus $\widetilde{X^{\prime}}  \overset{D}{\hookrightarrow } (CX)^{\prime}$ with 
$D = 4 \, \| \sigma_{3}\|_{X^{\prime}\rightarrow X^{\prime}}$.
\endproof

Particular duality results for Ces\`aro sequence spaces $ces_p$  and weighted Ces\`aro sequence spaces $ces_{p, w}$
were proved by several authors. 
Already in 1957 
Alexiewicz \cite{Al57} showed that for weight $w = (w_n)$ with $w_n \geq 0, w_1 > 0$ we have 
$\left(\widetilde{l^1(w)} \right)^{\prime} \equiv ces_{\infty, v}$, where $v(n) = \frac{n}{\sum_{k=1}^n w_k}$. Using the Fatou 
property of the space $\widetilde{l^1(w)}$ we obtain
\begin{equation} \label{Alex}
\left( ces_{\infty, v} \right)^{\prime} \equiv \left(\widetilde{l^1(w)} \right)^{\prime \prime} \equiv \widetilde{l^1(w)}.
\end{equation}

Jagers \cite{Ja74} presented isometric description of $\left( ces_{p, w} \right)^{\prime}$ for $1 < p < \infty$, which is not so easy to 
present shortly. Ng and Lee \cite{NL76} extended Jagers result on duality $(ces_{\infty, w})^{\prime}$ under additional assumption 
that $w_n \geq w_{n+1}$ for all $n \in \mathbb N$. Let us notice that the result in (\ref{Alex}) is simpler and precisely described.

Bennett \cite{Be96}, using factorization technique (technique where we replace the classical inequalities by identities), showed 
that for $1 < p < \infty$ we have $(ces_p)^{\prime} = \widetilde{(l^{p^{\prime}})}$ with equivalent norms. This identification follows 
from his Theorems 4.5 and 12.3 in \cite{Be96}. Moreover, on page 62, he proved (\ref{Alex}) with the equality of norms.

Grosse-Erdmann \cite[Corollary 7.5]{GE98}, using the blocking technique, was able to show Bennett's result on duality 
$(ces_p)^{\prime} = \widetilde{(l^{p^{\prime}})}$ with equivalent norms (for $1 < p < \infty$). He also has some weighted 
generalizations and duality results for more general sequence spaces. Blocking technique allow to replace sequence space 
which is quasi-normed by an expression in the section form by equivalent quasi-norm in the block form and vice versa:
$$
(\sum_{n=1}^{\infty} [a_n (\sum_{k=1}^n |x_k|^p)^{1/p}]^q)^{1/q} \approx 
(\sum_{\nu = 0}^{\infty} [2^{-\nu \alpha} (\sum_{k\in I_{\nu}} |x_k|^p)^{1/p}]^q)^{1/q},
$$
where $\{I_{\nu}\}$  is a partition of the natural numbers into disjoint intervals. Often, but not always, $I_{\nu}$  may be taken as 
the dyadic block $[2^{\nu}, 2^{\nu+1})$. The disadvantages of blocking technique is loosing the control over constants, it 
does not convey the best-possible constants and can be used only for $l^p$ or weighted $l^p$ spaces, not for more general spaces.

For example, if $1 \leq p < \infty$ and $\alpha < 1 - 1/p$, then the discrete Ces\`aro operator $C$ is bounded in weighted spaces 
$l^p(n^{\alpha})$ (see Hardy-Littlewood \cite{HL27} and Leindler \cite{Le70} with more general weights and $\| C \|_{l^p(n^{\alpha}) \rightarrow l^p(n^{\alpha})} \leq p \frac{(1 - \alpha) p}{(1 - \alpha) p - 1}$). Moreover, we can easily prove that 
$\| \sigma_{3} \|_{l^p(n^{\alpha}) \rightarrow l^p(n^{\alpha})} \leq 3^{1/p} \max(1, 3^{\alpha})$ and from 
Theorem \ref{thm:dualsequence} we obtain duality 
\begin{equation} \label{weight}
(ces_{p, \alpha})^{\prime} = (Cl^p(n^{\alpha}))^{\prime} =  \widetilde{l^{p^{\prime}}(n^{-\alpha})}
\end{equation}
with equivalent norms. This result was also proved in \cite[Theorem 7.2]{GE98} by use of the blocking technique. Our method gives these results as well, but is much simpler. Moreover, our Theorem \ref{thm:dualsequence} covers for example  Ces\`aro-Orlicz sequence spaces (cf. \cite{MPS07}) or  
weighted Ces\`aro-Orlicz sequence spaces, where the blocking technique, Jagers' method or Bennett's factorization seem to be not applicable.

\section{\protect Extreme case and applications}

First, we give a simple proof of a generalization of the Luxemburg-Zaanen \cite{LZ66} and Tandori \cite{Ta55} duality result to weighted 
$L^{\infty}$-spaces. 
They proved that $(Ces_{\infty}[0, 1])^{\prime} \equiv (CL^{\infty}[0, 1])^{\prime} \equiv \widetilde{L^1[0, 1]}$.

\begin{theorem} \label{thm:LZ}
Let either $I = [0, 1]$ or $I = [0,\infty)$. If a weight $w$ on $I$ is such that $W(x) = \int_0^x w(t) dt < \infty$ for any $x \in I$ and 
$v(x) = \frac{x}{W(x)}$, then
\begin{equation} \label{Thm6}
(Ces_{\infty, v})^{\prime} \equiv (CL^{\infty}(v))^{\prime} \equiv  \widetilde{L^1(w)}.
\end{equation}
\end{theorem}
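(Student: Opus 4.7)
The plan is to reduce the entire statement to a single direct application of Sinnamon's identity (Proposition \ref{Sin}), without invoking any of the more elaborate duality results earlier in the paper, which do not apply here since $L^\infty(v)$ is not rearrangement invariant in general. The reason this works is that the unit ball of $CL^\infty(v)$ admits a Hardy-type description which matches exactly the hypothesis of Proposition \ref{Sin}.

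First I would unfold the norm. Using the convention $\|h\|_{L^\infty(v)} = \|hv\|_\infty$ and the choice $v(x) = x/W(x)$, I compute
\[
\|f\|_{CL^\infty(v)} = \operatorname*{ess\,sup}_{x \in I} v(x)\cdot \frac{1}{x}\int_0^x |f(t)|\,dt = \operatorname*{ess\,sup}_{x \in I} \frac{1}{W(x)}\int_0^x |f(t)|\,dt.
\]
Since $x \mapsto \int_0^x |f|$ and $W$ are both continuous, the essential supremum here coincides with the pointwise supremum, so the unit ball $\{f:\|f\|_{CL^\infty(v)}\le 1\}$ is exactly the set of $f$ with
\[
\int_0^x |f(t)|\,dt \le \int_0^x w(t)\,dt = W(x) \quad \text{for every } x\in I,
\]
which is precisely the condition $|f|\prec w$ from Proposition \ref{Sin}.

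Next I would write out the associate norm using this observation:
\[
\|g\|_{(CL^\infty(v))^{\prime}} = \sup_{\|f\|_{CL^\infty(v)}\le 1}\int_I |f(x)g(x)|\,dx = \sup_{\substack{h\ge 0\\ h\prec w}}\int_I h(x)|g(x)|\,dx.
\]
Now I apply Proposition \ref{Sin} with $f$ replaced by $w$ and $g$ replaced by $|g|$; noting $\widetilde{|g|} = \widetilde{g}$, the right-hand side equals $\int_I w(x)\widetilde{g}(x)\,dx = \|g\|_{\widetilde{L^1(w)}}$.

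This yields $\|g\|_{(CL^\infty(v))^{\prime}} = \|g\|_{\widetilde{L^1(w)}}$, so $(CL^\infty(v))^{\prime}\equiv \widetilde{L^1(w)}$ isometrically, uniformly for $I=[0,1]$ and $I=[0,\infty)$. The proof works identically in both cases because Proposition \ref{Sin} is stated for both intervals. There is really no serious obstacle: the whole content sits in recognizing that the $CL^\infty(v)$-unit ball is exactly a "$\prec w$" down-set, after which Sinnamon's formula does everything. Specializing $w\equiv 1$ (so $v\equiv 1$) recovers the Luxemburg--Zaanen identification $(CL^\infty[0,1])'\equiv\widetilde{L^1[0,1]}$, and specializing further recovers Tandori's and Alexiewicz's sequence-case versions.
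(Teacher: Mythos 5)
Your proof is correct, and it takes a genuinely different route from the paper's. The key observation --- that the unit ball of $CL^{\infty}(v)$ is precisely the down-set $\{f: |f|\prec w\}$, so that the associate norm is literally the right-hand side of Sinnamon's identity with $f$ replaced by $w$ --- is sound (the only point you gloss over, that any $f$ of finite norm is locally integrable so the essential supremum is a genuine supremum, is easily checked since $\int_0^x|f|\le W(x)<\infty$ a.e. forces $f\in L^1_{loc}$), and a single application of Proposition \ref{Sin} then gives the isometric identity $\|g\|_{(CL^{\infty}(v))^{\prime}}=\int_I w\,\widetilde{g}\,dx=\|g\|_{\widetilde{L^1(w)}}$ in one stroke. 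The paper instead proves the two inclusions separately and more elementarily: the embedding $\widetilde{L^1(w)}\overset{1}{\hookrightarrow}(CL^{\infty}(v))^{\prime}$ follows from the classical Hardy-type lemma (if $\int_0^u|f|\le\int_0^u w$ for all $u$ and $h$ is nonincreasing, then $\int|f|h\le\int wh$), which is exactly the easy ``$\le$'' half of Sinnamon's identity; the reverse embedding is obtained indirectly by testing against the normalized indicators $g_{W,t}=W(t)^{-1}\chi_{[0,t]}$ to show $\bigl(\widetilde{L^1(w)}\bigr)^{\prime}\overset{1}{\hookrightarrow}CL^{\infty}(v)$ and then dualizing, using the Fatou property to identify $\bigl(\widetilde{L^1(w)}\bigr)^{\prime\prime}$ with $\widetilde{L^1(w)}$. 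Your version is shorter and more conceptual, but it concentrates all the real work (the ``$\ge$'' half of Proposition \ref{Sin}, i.e.\ producing near-optimal $h\prec w$) in an imported result; the paper's version is self-contained, needs no biduality-free substitute for that supremum, and transfers verbatim to the sequence case (\ref{Alex}), whereas your argument there requires the discrete analogue of Sinnamon's identity, which the paper only invokes separately in Section 5. Both arguments yield the full isometric statement $\equiv$ on both intervals.
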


\proof
Let $g \in  \widetilde{L^1(w)}$ and $f \in X:= CL^{\infty}(v)$ with the norm $\| f \|_X = 1$. Then
$$
\int_0^u |f(x)| dx \leq W(u) \| f\|_X = \int_0^u w(x) dx ~~ {\rm for ~all} ~~ u \in I
$$
and since $ \widetilde{g}$ is decreasing on $I$ we obtain
$$
\int_I |f(x)| \, \widetilde{g}(x) dx \leq \int_I w(x) \, \widetilde{g}(x) dx,
$$
and so
$$
\int_I |f(x) g(x)| dx \leq \int_I |f(x)| \, \widetilde{g}(x) dx \leq \int_I w(x) \, \widetilde{g}(x) dx.
$$
Thus $\| g \|_{X^{\prime}} \leq \| g \|_{ \widetilde{L^1(w)}}$ and $ \widetilde{L^1(w)} \overset{1}{\hookrightarrow } X^{\prime}$.

On the other hand, for $g_{W,  t}(x) = \frac{1}{W(t)} \chi_{[0, t]}(x)$ with $t, x \in I$ and $f \in  \left(\widetilde{L^1(w)} \right)^{\prime}$ we have
$$
\| g_{W, t} \|_{\widetilde{L^1(w)}} = \| \widetilde{g_{W, t}} \|_{L^1(w)} =  \| g_{W, t} \|_{L^1(w)} = \frac{1}{W(t)} \|  \chi_{[0, t]} \|_{L^1(w)} 
= \frac{\int_0^t w(x) dx}{W(t)} = 1
$$
and
\begin{eqnarray*}
\| f \|_{ \left(\widetilde{L^1(w)} \right)^{\prime}}
&=&
\sup_{\| g \|_{\widetilde{L^1(w)}} = \,1} \int_I |f(x) g(x)| dx \geq \sup_{t \in I}  \int_I |f(x) g_{W, t} (x)| dx \\
&=& 
\sup_{t \in I} \frac{ \int_0^t |f(x)| dx} {W(x)} = \sup_{t \in I} C|f|(t) \frac{ t} {W(t)} = \| C|f| \|_{L^{\infty}(v)} = \| f \|_X.
\end{eqnarray*}
This means $\left (\widetilde{L^1(w)} \right)^{\prime} \overset{1}{\hookrightarrow } X$ or 
$$
 X^{\prime} \overset{1}{\hookrightarrow } \left( \widetilde{L^1(w)} \right)^{\prime \prime} \equiv  \widetilde{L^1(w)}.
 $$
 \endproof

The above proof works as well in the case of sequence spaces and gives (\ref{Alex}).

Section 4 of the paper \cite{DS07} was devoted to the identification of Ces\`aro spaces $CX$ with $X$ being 
the Lorentz space $\Lambda_{\varphi}$ defined on $I = [0, \infty)$ by
$$
\Lambda_{\varphi}=\{f\in L^0: \| f \|_{\Lambda _{\varphi}}=\int_{I}f^{\ast}(t) d\varphi(t) < \infty \},
$$
where $\varphi$ is a concave, positive and increasing function on $I$ with  $\varphi(0)=0$. We shall demonstrate, that the result proved 
in \cite[Theorem 4.4]{DS07} is a straightforward consequence of our duality result in Theorem 3. 

\begin{theorem}  \label{thm:DS}
For a Lorentz space $\Lambda_{\varphi}$ on $I = [0,\infty)$ with $\varphi$ satisfying $\varphi(0^+) = 0$ and for which there 
are constants $c_1, c_2 > 0$ such that 
\begin{equation} \label{Thm7}
\int_0^t \frac{\varphi(s)}{s} \, ds \leq c_1 \varphi(t), ~~\int_t^{\infty}  \frac{\varphi(s)}{s^2} \, ds \leq c_2  \frac{\varphi(t)}{t} ~~{\it for ~all} ~ t > 0,
\end{equation}
we have
$$
C\Lambda_{\varphi}=L^1(\varphi(t)/t).
$$
\end{theorem}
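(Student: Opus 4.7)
The plan is to deduce Theorem \ref{thm:DS} from the duality result in Theorem \ref{thm:dualgeneralR} together with an elementary identification of the Köthe dual of $\Lambda_{\varphi}$ and a direct computation involving the majorant $\widetilde{g}$. Throughout I will exploit that $\Lambda_\varphi$ has the Fatou property, hence so does $C\Lambda_\varphi$ by Theorem 1(d), and $L^1(\varphi(t)/t)$ has the Fatou property as well, so it is enough to check that the two spaces have equivalent Köthe duals.

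First I would verify the hypotheses of Theorem \ref{thm:dualgeneralR}. Since $\Lambda_\varphi$ is symmetric, the dilation operators $\sigma_\tau$ are bounded on it automatically. Boundedness of the Cesàro operator $C$ on $\Lambda_\varphi$ is a classical fact equivalent, under the concavity assumption on $\varphi$, to the second condition in \eqref{Thm7}, namely $\int_t^\infty \varphi(s)/s^2\,ds\leq c_2\,\varphi(t)/t$. Thus Theorem \ref{thm:dualgeneralR} gives $(C\Lambda_\varphi)' = \widetilde{(\Lambda_\varphi)'}$ with equivalent norms.

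Next I would recall the standard identification $(\Lambda_\varphi)' = M_\varphi$, the Marcinkiewicz space with norm $\|g\|_{M_\varphi}=\sup_{t>0}\frac{1}{\varphi(t)}\int_0^t g^\ast(s)\,ds$. Since $\widetilde{g}$ is nonincreasing, $(\widetilde{g})^\ast=\widetilde{g}$, so
\begin{equation*}
\|g\|_{\widetilde{(\Lambda_\varphi)'}} = \|\widetilde{g}\|_{M_\varphi} = \sup_{t>0}\frac{1}{\varphi(t)}\int_0^t \widetilde{g}(s)\,ds.
\end{equation*}
My target is to show that this is equivalent to $\|g\|_{L^\infty(t/\varphi(t))} = \operatorname*{ess\,sup}_{t>0} |g(t)|\,t/\varphi(t)$, which is the Köthe-dual norm of $L^1(\varphi(t)/t)$. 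Because $\varphi$ is concave with $\varphi(0)=0$, the function $t/\varphi(t)$ is nondecreasing, and a direct check gives $\sup_t |g(t)|\,t/\varphi(t)=\sup_t \widetilde{g}(t)\,t/\varphi(t)$; so the problem reduces to comparing the two functionals on the nonincreasing function $h=\widetilde{g}$.

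For the easy direction, monotonicity of $h$ yields $\int_0^t h(s)\,ds\geq t\,h(t)$, so $\sup_t h(t)\,t/\varphi(t)\leq \sup_t \frac{1}{\varphi(t)}\int_0^t h(s)\,ds$. For the reverse direction, set $N=\sup_t h(t)\,t/\varphi(t)$, so that $h(s)\leq N\,\varphi(s)/s$; applying the first condition in \eqref{Thm7} gives $\int_0^t h(s)\,ds\leq N\int_0^t \varphi(s)/s\,ds\leq N c_1\varphi(t)$, hence $\sup_t\frac{1}{\varphi(t)}\int_0^t h(s)\,ds\leq c_1 N$. This produces $\widetilde{(\Lambda_\varphi)'}=L^\infty(t/\varphi(t))$ with equivalent norms. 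Taking Köthe duals and invoking the Fatou property of $C\Lambda_\varphi$ and of $L^1(\varphi(t)/t)$ yields $C\Lambda_\varphi=[L^\infty(t/\varphi(t))]' = L^1(\varphi(t)/t)$.

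The main obstacle is essentially bookkeeping: one needs to be sure that the two integral conditions in \eqref{Thm7} are used in exactly the right places (the second to legitimize the application of Theorem \ref{thm:dualgeneralR}, the first to control $\int_0^t \varphi(s)/s\,ds$ when comparing the Marcinkiewicz-type supremum with the weighted $L^\infty$-norm) and that the passage through rearrangements and the majorant $\widetilde{g}$ respects the monotonicity of the weight $t/\varphi(t)$, which is where concavity of $\varphi$ enters in an essential way.
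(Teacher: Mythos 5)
Your proposal is correct and takes essentially the same route as the paper: both apply Theorem \ref{thm:dualgeneralR} (with the second condition in (\ref{Thm7}) supplying boundedness of $C$ on $\Lambda_\varphi$), identify $(\Lambda_\varphi)'$ with the Marcinkiewicz space, use the first condition in (\ref{Thm7}) to pass from the $\int_0^t g^*$-type supremum to the pointwise one, compute that the resulting tilde-space is $L^\infty(t/\varphi(t))$ via monotonicity of $t/\varphi(t)$, and finish by K\"othe biduality and the Fatou property. Your two-sided comparison on the nonincreasing function $h=\widetilde{g}$ is exactly the paper's equivalence $\| \cdot \|_{M^*_{t/\varphi(t)}} \leq \| \cdot \|_{M_{t/\varphi(t)}} \leq c_1 \| \cdot \|_{M^*_{t/\varphi(t)}}$ written out explicitly.
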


\proof
It is known that $(\Lambda_{\varphi})^{\prime} = M_{t/\varphi(t)}$ (see \cite[Theorem 5.2, p. 112]{KPS82}), where  $M_{t/\varphi(t)}$ 
is the Marcinkiewicz space given by the norm
$$
\| f \|_{M_{t/\varphi(t)}} = \sup_{t > 0}\frac{tf^{**}(t)}{\varphi(t)} = \sup_{t > 0}\frac{\int_0^t f^{*}(s)\, ds}{\varphi(t)}.
$$
Since $\| f \|_{M^*_{t/\varphi(t)}} \leq \| f \|_{M_{t/\varphi(t)}} \leq c_1 \| f \|_{M^*_{t/\varphi(t)}}$, where 
$\| f \|_{M^*_{t/\varphi(t)}} = \sup_{t > 0}\frac{tf^{*}(t)}{\varphi(t)}$, it follows from Theorem \ref{thm:dualgeneralR} (second 
estimate in (\ref{Thm7}) ensures boundedness of operator $C$ in $\Lambda_{\varphi}$) that 
$$
(C\Lambda_{\varphi})^{\prime} = \widetilde{\Lambda_{\varphi}^{\prime}} =  \widetilde{M_{t/\varphi(t)}} =  \widetilde{M^*_{t/\varphi(t)}}.
$$
Also $\left (\widetilde{f} \right)^*=\widetilde{f}$ gives that 
\begin{eqnarray*} 
\|f\|_{\widetilde{M^*_{t/\varphi(t)}}} 
&=&
\sup_{t > 0} \frac{t\widetilde{f}(t)}{\varphi(t)} = \sup_{t > 0} {\rm ess} \sup_{s \geq t} \frac{tf(s)}{\varphi(t)} 
={\rm ess} \sup_{s > 0} \sup_{t \leq s}\frac{tf(s)}{\varphi(t)} \\
&=& 
{\rm ess} \sup_{s > 0}\frac{s |f(s)|}{\varphi(s)} = \| f \|_{L^{\infty}(t/\varphi(t))}.
\end{eqnarray*}
Therefore, $\widetilde{M^*_{t/\varphi(t)}} \equiv L^{\infty}(t/\varphi(t))$ and the result follows by the duality 
$[L^{\infty}(\varphi(t)/t)]^{\prime} \equiv L^1(t/\varphi(t))$. In fact,
$$
C\Lambda_{\varphi} \equiv (C\Lambda_{\varphi})^{\prime \prime} = (\widetilde{M_{t/\varphi(t)}})^{\prime} 
= (\widetilde{M^*_{t/\varphi(t)}})^{\prime} \equiv [L^{\infty}(t/\varphi(t))]^{\prime} \equiv L^1(\varphi(t)/t).
$$
\endproof

\vspace{-5mm}

\section{\protect \medskip Appendix A}

We present here a simple proof of the weighted version of the Calder\'on-Mitjagin interpolation theorem. We will use notations 
from  \cite{KPS82} and \cite{BS88}. 

\begin{proposition}  \label{thm:CM}
Let weight $w$ and all symmetric spaces $X, L^1, L^{\infty}$ be on $I$. If $X$ is an interpolation space between $L^1$ and 
$L^{\infty}$ with $\| T \|_{X \rightarrow X} \leq C \max ( \| T \|_{L^1 \rightarrow L^1}, \| T \|_{L^{\infty} \rightarrow L^{\infty}})$, 
then $X(w)$ is an interpolation space between $L^1(w)$ and $L^{\infty}(w)$ and
\begin{equation} \label{Thm8}
\| T\|_{X(w) \rightarrow X(w)} \leq C \max ( \| T \|_{L^1(w) \rightarrow L^1(w)},  \| T \|_{L^{\infty}(w) \rightarrow L^{\infty}(w)}). 
\end{equation}
\end{proposition}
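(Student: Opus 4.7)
The plan is to reduce the weighted statement to the unweighted hypothesis by conjugating every operator with the multiplication map $M_w \colon f \mapsto f w$. Observe that $M_w$ is a lattice isometry from $X(w)$ onto $X$, from $L^1(w)$ onto $L^1$, and from $L^\infty(w)$ onto $L^\infty$, by the very definition of the weighted norm. This simultaneous isometric identification is the whole trick: although $L^1(w), L^\infty(w), X(w)$ are typically \emph{not} symmetric, the map $M_w$ moves the problem into a triple of symmetric spaces where the hypothesis directly applies.

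First I would take an operator $T$ bounded on both $L^1(w)$ and $L^\infty(w)$ and set $\widetilde{T} := M_w\, T\, M_w^{-1}$, where $M_w^{-1} g = g/w$. Routine computation gives
\[
\|\widetilde{T}g\|_{L^1} = \|w\cdot T(g/w)\|_{L^1} = \|T(g/w)\|_{L^1(w)} \le \|T\|_{L^1(w)\to L^1(w)}\|g\|_{L^1},
\]
and analogously $\|\widetilde{T}\|_{L^\infty\to L^\infty}\le \|T\|_{L^\infty(w)\to L^\infty(w)}$ (both are in fact equalities, because $M_w$ is an isometry on each side).

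Next, since $X$ is an interpolation space between $L^1$ and $L^\infty$ with Calder\'on--Mitjagin constant $C$, the hypothesis applied to $\widetilde{T}$ yields
\[
\|\widetilde{T}\|_{X\to X}\le C\max\bigl(\|\widetilde{T}\|_{L^1\to L^1},\,\|\widetilde{T}\|_{L^\infty\to L^\infty}\bigr)
\le C\max\bigl(\|T\|_{L^1(w)\to L^1(w)},\,\|T\|_{L^\infty(w)\to L^\infty(w)}\bigr).
\]
Finally, to transfer the bound back, for $f\in X(w)$ I would write
\[
\|Tf\|_{X(w)}=\|w\,Tf\|_X=\|\widetilde{T}(wf)\|_X\le \|\widetilde{T}\|_{X\to X}\|wf\|_X=\|\widetilde{T}\|_{X\to X}\|f\|_{X(w)},
\]
which gives exactly \eqref{Thm8}.

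There is no serious obstacle; the only point that requires a line of attention is that $\widetilde{T}$ is a well-defined linear operator on $L^1+L^\infty$, which is immediate from the fact that $M_w$ and $M_w^{-1}$ are linear bijections of $L^0(I)$ and $T$ was assumed to act on $L^1(w)+L^\infty(w)$. One should also note that the argument is a purely Banach-lattice statement and does not use any rearrangement-invariant property of $X(w)$ itself — the symmetry is needed only on the unweighted side, where the hypothesis lives.
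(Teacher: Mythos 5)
Your proof is correct, and it takes a genuinely different route from the paper's. You conjugate by the multiplication operator $M_w\colon f\mapsto fw$, which (by the paper's definition $\|f\|_{X(w)}=\|fw\|_X$) is simultaneously a lattice isometry of $X(w)$ onto $X$, of $L^1(w)$ onto $L^1$, and of $L^\infty(w)$ onto $L^\infty$; since the paper's weights satisfy $0<w<\infty$ a.e., $M_w$ is indeed a bijection of $L^0(I)$ and $\widetilde T=M_wTM_w^{-1}$ is well defined on $L^1+L^\infty$, so the interpolation hypothesis applies verbatim to $\widetilde T$ and transfers back with the same constant $C$. The paper instead works at the level of the $K$-functional: it first proves the identity $K(t,f;L^1(w),L^\infty(w))=K(t,fw;L^1,L^\infty)$ (taking some care on the support of $w$), deduces that $T$ contracts the $K$-functional of the weighted couple up to $\max(C_1,C_\infty)$, and then invokes the Calder\'on--Mitjagin theorem to pass from $K$-functional domination to membership in $X$. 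Your argument is more elementary in that it never needs the Calder\'on--Mitjagin characterization or any $K$-functional computation --- only the definition of an interpolation space with constant $C$ --- and it makes transparent why no symmetry of $X(w)$ is required. What the paper's route buys in exchange is the explicit $K$-functional formula for the weighted couple (of independent interest, and morally equivalent to your isometry observation) and the slightly stronger conclusion that the weighted couple inherits $K$-monotonicity; both facts also follow from your conjugation trick, but you would have to state them separately.
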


\proof
First, note that for $f \in L^1(w) + L^{\infty}(w)$ we have
\begin{equation} \label{K}
K(t, f;  L^1(w), L^{\infty}(w)) = K(t, fw;  L^1, L^{\infty}).
\end{equation}
In fact, if $f = g + h$ is an arbitrary decomposition of $f$ with $g \in L^1(w)$ and $h \in L^{\infty}(w)$, then
$gw \in L^1, hw \in L^{\infty}$ and so
$$
K(t, fw;  L^1, L^{\infty}) \leq \| g w\|_{L^1} + t \| h w \|_{L^{\infty}} =  \| g \|_{L^1(w)} + t \| h\|_{L^{\infty}(w)},
$$
which gives that $f w \in L^1 + L^{\infty}$ or $f \in (L^1 + L^{\infty})(w)$ and 
$$
K(t, fw;  L^1, L^{\infty}) \leq K(t, f;  L^1(w), L^{\infty}(w)).
$$
On the other hand, if $f \in (L^1 + L^{\infty})(w)$ or $f w \in L^1 + L^{\infty}$, then for arbitrary decomposition $fw = g_1 + g_2$ 
with $g_1 \in L^1, g_2 \in L^{\infty}$ we take for $i = 1, 2$
$$
f_i = \frac{g_i}{w} ~~ {\rm on ~the ~support ~of} ~w ~ {\rm and} ~~ f_i = 0 ~~ {\rm elsewhere}.
$$
Then 
$$
f = \frac{g_1}{w} + \frac{g_2}{w} = f_1 + f_2 ~~ {\rm on ~the ~support ~of} ~w ~ {\rm and} ~~ f = 0 ~~ {\rm elsewhere},
$$
and so $f_1 \in L^1(w), f_2 \in L^{\infty}(w)$. Therefore,
\begin{eqnarray*}
K(t, f;  L^1(w), L^{\infty}(w)) 
&\leq& 
 \| f_1 \|_{L^1(w)} + t \| f_2\|_{L^{\infty}(w)} \\
&=& 
\| f_1 w \|_{L^1} + t \| f_2 w \|_{L^{\infty}} = \| g_1 \|_{L^1} + t \| g_2 \|_{L^{\infty}}
\end{eqnarray*}
for arbitrary decomposition $fw = g_1 + g_2$, which gives
$$
K(t, f;  L^1(w), L^{\infty}(w)) \leq K(t, fw;  L^1, L^{\infty}), 
$$
and (\ref{K}) is proved. Second, if $T: (L^1(w), L^{\infty}(w)) \rightarrow (L^1(w), L^{\infty}(w))$ is a bounded linear operator, then
$$
K(t, Tf;  L^1(w), L^{\infty}(w)) \leq \max(C_1, C_{\infty}) \,  K(t, f;  L^1(w), L^{\infty}(w))
$$
for any $fw \in L^1 + L^{\infty}$, where $C_i = \| T \|_{L^i(w) \rightarrow L^i(w)}, i = 1, \infty$. Therefore, by (\ref{K}), we obtain
$$
K(t, (Tf) \, w;  L^1, L^{\infty}) \leq \max(C_1, C_{\infty}) \,  K(t, fw;  L^1, L^{\infty}) ~~ {\rm for ~any} ~~fw \in L^1 + L^{\infty}.
$$
If now $fw \in X$, then by the Calder\'on-Mitjagin interpolation theorem (cf. \cite[Theorem 3]{Ca66}, \cite[Theorem 4.3 on p. 95]{KPS82} 
and \cite[Theorem 2.12]{BS88}) we have $(Tf) w \in X$ and
$$
\| (Tf) w\|_X \leq C  \max(C_1, C_{\infty}) \, \|f w\|_X ~~ {\rm or} ~~  \|Tf \|_{X(w)} \leq C  \max(C_1, C_{\infty}) \, \|  f\|_{X(w)}.
$$
Thus, estimate (\ref{Thm8}) is proved.
\endproof

\vspace{-5mm}

{\section{\protect \medskip Appendix B}

We give an improvement of the Hardy inequality on $[0, 1]$.
\vspace{2mm}

\begin{theorem}  \label{thm:Hardy}
If $1 \leq p < \infty$ and $\alpha < 1-1/p$, then
\begin{equation} \label{inequalityHardy}
\int_0^1 \left[ Cf(x) \, x^{\alpha} \right]^p\, dx \leq \left( C_{p, \alpha} \right)^p \int_0^1 \left[ (1-x) f(x) \, x^{\alpha} \right]^p\, dx
\end{equation}
for all $0 \leq f \in L^p((1-x) x^{\alpha})$, where $C_{p, \alpha} = \dfrac{p}{p-\alpha p - 1} \max (1, p-\alpha p - 1)^{1/p} $.
\end{theorem}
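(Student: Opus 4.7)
The strategy is to adapt the classical integration-by-parts proof of the weighted Hardy inequality on $(0,\infty)$, exploiting the extra factor $(1-x)$ on the right via a sharp pointwise bound. Set $h(x):=\int_0^x f(t)\,dt$ and $\gamma:=\alpha p-p+1$, which is strictly negative by the hypothesis $\alpha<1-1/p$. Then the left-hand side of (\ref{inequalityHardy}) is exactly $J:=\int_0^1 h(x)^p x^{\gamma-1}\,dx$. Writing $h(x)^p=p\int_0^x h(y)^{p-1}h'(y)\,dy$ and applying Fubini (valid by nonnegativity) yields the basic identity
$$J=\frac{p}{|\gamma|}\int_0^1 h(y)^{p-1}h'(y)(y^\gamma-1)\,dy.$$

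The next step is the elementary one-variable inequality
$$y^\gamma-1\;\le\;\max(1,|\gamma|)\,(1-y)\,y^\gamma,\qquad y\in(0,1],$$
which after division by $y^\gamma$ is the claim $(1-y^{|\gamma|})/(1-y)\le\max(1,|\gamma|)$. This follows from the monotonicity of $y\mapsto(1-y^{|\gamma|})/(1-y)$ on $(0,1)$ combined with its endpoint values $1$ at $y\to 0^+$ and $|\gamma|$ at $y\to 1^-$. This is the single step where both the factor $(1-y)$ demanded by the right-hand side and the factor $\max(1,|\gamma|)$ appearing in $C_{p,\alpha}$ enter the argument.

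Finally, substituting the elementary bound into the identity and applying H\"older's inequality with conjugate exponents $p/(p-1)$ and $p$ via the balanced split
$$h(y)^{p-1}h'(y)(1-y)y^\gamma=\bigl[h(y)^{p-1}y^{(\gamma-1)(p-1)/p}\bigr]\cdot\bigl[h'(y)(1-y)y^\alpha\bigr]$$
(the exponents matching thanks to the identity $\gamma-(\gamma-1)(p-1)/p=\alpha$) one obtains, with $R:=\int_0^1 f(y)^p(1-y)^py^{\alpha p}\,dy$ denoting the target right-hand side,
$$J\;\le\;\frac{p\max(1,|\gamma|)}{|\gamma|}\,J^{(p-1)/p}R^{1/p}.$$
Division by $J^{(p-1)/p}$ then produces the desired inequality in the form $J^{1/p}\le C\,R^{1/p}$.

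The main obstacles are two. First, the division by $J^{(p-1)/p}$ requires $J<\infty$; this is handled by the standard device of first replacing $f$ with $f\chi_{[\varepsilon,1-\varepsilon]}$ (so that $J$ is trivially finite), proving the estimate for the truncation, and then letting $\varepsilon\to 0^+$ by monotone convergence. Second, to match the precise constant $C_{p,\alpha}$ stated in the theorem---whose $\max(1,|\gamma|)$-factor appears only to the $1/p$-power rather than the first power---one must refine the middle step by keeping the ratio $(1-y^{|\gamma|})/(1-y)$ alive inside the H\"older estimation rather than discarding it through a pointwise bound at the outset; this is where the delicate part of the argument lies.
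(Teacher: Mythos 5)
Your proof is correct and is essentially the paper's own argument: your identity $h(x)^p=p\int_0^x h(y)^{p-1}h'(y)\,dy$ is the Davis--Petersen lemma used there, and the Fubini step, the elementary bound $1-y^{|\gamma|}\le\max(1,|\gamma|)(1-y)$ (which the paper proves via the Bernoulli inequality), the H\"older split producing $J^{(p-1)/p}R^{1/p}$, and the final division all coincide with the paper's proof, with only cosmetic differences (order of H\"older versus the pointwise bound, truncation versus a density argument for the finiteness of $J$). The constant discrepancy you flag at the end is real but is not a defect of your argument: the paper's own proof likewise only yields $\max(1,p-\alpha p-1)$ to the first power, and its appearance with exponent $1/p$ in the final display (and hence in the stated $C_{p,\alpha}$) appears to be a slip, since the step actually used is $(1-t^{p-\alpha p-1})^p\le\max(1,p-\alpha p-1)^p(1-t)^p$.
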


\proof
For $p = 1, \alpha < 0$ and $0 \leq f \in L^1(x^{\alpha})$ we have by the Fubini theorem
\begin{eqnarray*}
\int_0^1 \left[ x^{\alpha} Cf(x) \right] \, dx 
&=&
\int_0^1 x^{\alpha - 1} \left( \int_0^x f(t) \, dt \right) \, dx = \int_0^1 \left( \int_t^1 x^{\alpha - 1} \, dx \right) f(t) \, dt \\
&=&
\frac{1}{-\alpha} \int_0^1 (1 - t^{-\alpha}) f(t) t^{\alpha} \, dt \leq \frac{\max(1, - \alpha)}{-\alpha} \int_0^1 (1 - t) f(t) t^{\alpha} \, dt.
\end{eqnarray*}
Let $1 < p < \infty$ and $0 \leq f \in L^1[0, 1]$. Simple differentiation of $F(x) = (\int_0^x f(t)\, dt)^p$ gives equality (sometimes 
refered as Davis-Petersen's lemma -- see \cite[Lemma 2]{DP64})
\begin{equation} \label{equalityDP}
\left( \int_0^x f(t) \, dt \right)^p = p \int_0^x f(t) \left [ \int_0^t f(s) \,ds \right]^{p-1} dt.
\end{equation}
Let $0 \leq f \in L^p(x^{\alpha})$. Of course, $f \in L^1$ because
$L^p(x^{\alpha})\overset{A}{\hookrightarrow } L^1$ with $A = (1-\alpha p')^{-1/p'}$. We have
\begin{eqnarray*}
I 
&=& 
\int_0^1 \left[ x^{\alpha} \, Cf(x) \right]^p\, dx = \int_0^1 x^{(\alpha - 1) p} \left(\int_0^x f(t) \, dt \right)^p dx \\
&=& 
p \int_0^1 x^{(\alpha - 1) p} \left( \int_0^x g(t) \, t^{p-1} dt \right) dx,
\end{eqnarray*}
where $g(t) = f(t) [Cf(t)]^{p-1}$. By the Fubini theorem and the H\"older-Rogers inequality the last integral is
\begin{eqnarray*}
I 
&=& 
p \int_0^1 \left( \int_t^1 x^{(\alpha - 1) p} dx \right) g(t) \, t^{p-1} dt 
= p \int_0^1 \dfrac{1 - t^{(\alpha-1)p +1}}{(\alpha - 1)p + 1} g(t) \, t^{p-1} dt \\
&=&
\dfrac{p}{(1-\alpha)p - 1} \int_0^1 \left( t^{(\alpha -1) p + 1} -1\right) t^{-\alpha p + p - 1} g(t) \, t^{\alpha p} dt \\
&=&
\dfrac{p}{p-\alpha p - 1} \int_0^1 \left( 1- t^{p - \alpha p -1} \right) f(t) [ Cf(t)]^{p-1} \, t^{\alpha p} dt \\
&\leq&
\dfrac{p}{p-\alpha p - 1} \left( \int_0^1 \left( 1- t^{p - \alpha p -1} \right)^p f(t)^p \, t^{\alpha p} dt \right)^{1/p} \,  
\left( \int_0^1Cf(t)^p  \, t^{\alpha p} dt \right)^{1/p'}.
\end{eqnarray*}
Observe that $p-\alpha p - 1 > 0$ and so
$$
1 - t^{p - \alpha p -1} \leq \max (1, p - \alpha p -1) \, (1-t) ~~{\rm for} ~~t \in I.
$$
Really, if $p - \alpha p -1 \leq 1$, then it is clear and if $p - \alpha p -1 \geq 1$, then by the Bernoulli inequality
$$
t^{p - \alpha p -1} = (1 + t - 1)^{p - \alpha p -1} \geq 1 + (p - \alpha p -1)(t - 1),
$$
that is, $1 - t^{p - \alpha p -1} \leq (p - \alpha p -1) (1-t)$. Moreover, note that if $0 \leq f \in L^p(x^{\alpha})$ and 
$\alpha < 1 - 1/p$, then by the classical Hardy inequality (cf. \cite{HL27}, \cite{HLP52}, \cite{KMP07}) $Cf \in L^p(x^{\alpha})$.
Hence,
\begin{eqnarray*}
I 
&=& 
\int_0^1 \left[ Cf(x) \, x^{\alpha} \right]^p\, dx \\
&\leq& 
\dfrac{p}{p-\alpha p - 1} \max (1, p-\alpha p - 1)^{1/p} \left( \int_0^1 ( 1- t)^p f(t)^p \, t^{\alpha p} dt \right)^{1/p} \, I^{1/p'},
\end{eqnarray*}
and dividing by $I^{1/p'}$ we obtain
$$
\left (\int_0^1 \left[ Cf(x) x^{\alpha} \right]^p\, dx \right)^{1/p} \leq C_{p, \alpha} \left( \int_0^1 ( 1- x)^p f(x)^p \, x^{\alpha p} dx \right)^{1/p},
$$
which is (\ref{inequalityHardy}) for all $0 \leq f \in L^p(x^{\alpha})$.
Since subspace $L^p(x^{\alpha})$ is dense in $L^p((1-x) x^{\alpha})$ we can extend estimate (\ref{inequalityHardy}) to all 
$f \in L^p((1-x) x^{\alpha})$, which finishes the proof.
\endproof


\end{document}